\theoremstyle{plain}
\newtheorem{thm}{Theorem}[section]
\newtheorem{theorem}{Theorem}[section]
\newtheorem{lemma}{Lemma}[section]
\newtheorem{proposition}{Proposition}[section]
\newtheorem{defin}{Definition}[section]
\newtheorem{corollary}{Corollary}[section]
\numberwithin{equation}{section}
\newcommand{\p}{\partial}
\newcommand{\m}{\mathbb}
\newcommand{\g}{\gamma}
\newcommand{\D}{\Delta}
\newcommand{\rr}{\mathbb{R}}
\newcommand{\nn}{\mathbb{N}}
\newcommand{\ci}{\mathbb{T}}
\newcommand{\wh}{\widehat}
\newcommand{\ee}{\varepsilon}
\newcommand{\vp}{\varphi}
\newcommand{\s}{\sigma}
\newcommand{\ms}{\mathcal{S}}
\newcommand{\mf}{\mathscr{F}}
\begin{document}

\title[Cauchy Problem for the FW Equation]{Well-posedness and continuity properties of The Fornberg-Whitham Equation in Besov Spaces}
\author{John Holmes \& Ryan C. Thompson}
\date{May 31, 2016}
\address{Department of Mathematics\\Ohio State University\\Columbus, OH 43210}
\email{holmes.782@osu.edu}
\address{Department of Mathematics\\University of North Georgia\\ Dahlonega, GA 30597}
\email{ryan.thompson@ung.edu}

\begin{abstract}
In this paper, we prove well-posedness of the Fornberg-Whitham equation in Besov spaces $B_{2,r}^s$ in both the periodic and non-periodic cases.  This will imply the existence and uniqueness of solutions in the aforementioned spaces along with the continuity of the data-to-solution map provided that the initial data belongs to $B_{2,r}^s $.  We also establish sharpness of continuity on the data-to-solution map by showing that it is not uniformly continuous from any bounded subset of $B_{2,r}^s$ to $C([-T,T]; B^s_{2,r})$.  Furthermore, we prove a Cauchy-Kowalevski type theorem for this equation that establishes the existence and uniqueness of real analytic solutions and also provide blow-up criterion for solutions.
\end{abstract}

\maketitle

%
%
%
%
\section{Introduction}
We consider the Cauchy problem for the Fornberg-Whitham (FW) equation
\begin{equation}
\label{fw}
\begin{cases}
u_{xxt}-u_t+\frac92u_xu_{xx}+\frac 32uu_{xxx}-\frac32uu_x+u_x=0 \\ 
u(x,0)=u_0(x),
\end{cases}
\end{equation}
when $x \in \rr$ or $\ci$ and $ t \in \m{R}$.  This equation was first written down in 1967 by Whitham \cite{whitham} and again by Whitham and Fornberg \cite{fw} as a model for breaking waves.  We show that the above Cauchy problem for the FW equation is well-posed in both periodic and non-periodic  Besov spaces $B_{2,r}^s$ for particular parameter values of $r$ and $s$.  Furthermore, we demonstrate that the data-to-solution map is not uniformly continuous from any bounded subset of $B_{2,r}^s$ to $C([-T,T]; B^s_{2,r})$ for the aforementioned parameters, however it is H\"older continuous in the $B_{2,r}^\alpha$ topology for any $\alpha < s$.  We also establish the existence and uniqueness of real analytic solutions provided that the initial data is real analytic as well as formulating blow-up criterion of solutions. Our results hold in both periodic and non-periodic spaces, unless otherwise stated.

The FW equation can also be written in its non-local form
\begin{equation}
\label{fwnl}
u_t+\frac32uu_x=(1-\p_x^2)^{-1}\p_xu,
\end{equation}
where $\mathcal{F}[ {(1-\p_x^2)^{-1}f }]= \frac1{1+\xi^2} \wh f(\xi)$ for any test function $f$.  If we write the FW equation in this form, we see that it resembles a special case of a family of nonlinear wave equations
\[
u_t+auu_x=\mathscr{L}(u),
\]
where $a\in\m{R}$ and $\mathscr{L}$ is a linear operator with constant coefficients, which has been studied by multiple authors.  In both \cite{whitham} and \cite{fw}, the FW equation \eqref{fwnl} was compared with the Korteweg-de Vries (KdV) equation
\[
u_t+6uu_x=-\p_x^3u,
\]
which was first derived by Boussinesque \cite{b} in 1877 and then by Korteweg and de Vries \cite{kdv} in 1895.

Since 1834 with John Scott Russell's observation of solitary waves, we have known of their physical existence and it wasn't until the advent of the KdV equation via Boussinesque, Korteweg and de Vries that we were finally able to see their mathematical existence as well.  In fact, we have in the non-periodic case, that the solitary wave solutions of the KdV equation are of the form
\[
u(x,t)=\frac c2 \text{sech}^2\left(\frac{\sqrt c}{2} (x-ct)\right),
\]
where the constant $c$ is the wave speed.  Unfortunately, while we now had a model that generated Russell's solitary wave solutions, it was only appropriate for long wave behavior and not for short wave phenomena close to shore lines, which exhibit the property of wave breaking.  Furthermore, it did not produce solitary waves of greatest height with a sharp peaked crest which are now called peakons.  The goal now had become one of seeking out a model that generated such solutions that not only maintained dispersive qualities like KdV, but also exhibited the aforementioned property of wave breaking.

In 1967, while Whitham \cite{whitham} was exploring applications to water waves he wrote down the FW equation \eqref{fwnl} and noted that it produced the so-called peakon solutions with maximum amplitude of 8/9.  Then in 1978, Whitham and Fornberg \cite{fw} researched both numerical and theoretical results on the FW equation and wrote down the explicit peakon solutions
\[
u(x,t)=\frac89e^{-\frac12\left|x-\frac43t\right|},
\]
along with noting the wave breaking properties.  In fact, $8/9$ was found as a limiting behavior of the amplitude of the above exponential peakon.  From then on, many authors have researched and found other nonlinear wave equations that resemble that of FW.

In 1993, Camassa and Holm \cite{ch} produced the equation which now bears their names (Camassa-Holm equation (CH))
\begin{equation}
\label{ch}
(1-\p_x^2)u_t=uu_{xxx}+2u_xu_{xx}-3uu_x
\end{equation}
which appears in the context of hereditary symmetries studied by Fokas and Fuchssteiner \cite{ff}.  This equation was first written explicitly and derived from the Euler equations  by Camassa and Holm in \cite{ch}, where they also found its 
``peakon" traveling wave solutions.
These are solutions with a discontinuity in the first spacial derivative at its crest. The simplest one in the non-periodic case
is of the form $u_c(x, t)=ce^{-|x-t|}$, where $c$ is a positive constant.
The CH equation possesses many  other 
remarkable properties such as infinitely many conserved quantities, a bi-Hamiltonian structure and a Lax pair.
For more information about how CH arises in the context of hereditary symmetries we refer to \cite{ff}.  Concerning it's physical relevance, we refer the reader to the works by Johnson \cite{j1}, \cite{j2} and Constantine and Lannes \cite{cl}.

In 1998 the following equation appeared
\begin{equation}
\label{dp}
(1-\p_x^2)u_t=uu_{xxx}+3u_xu_{xx}-4uu_x.
\end{equation}
Degasperi and Procesi \cite{dp} discovered this equation as one of the three equations to satisfy asymptotic integrability conditions in the following family of equations
\begin{equation}
\label{fam}
u_t+c_0u_x+\g u_{xxx}-\alpha^2u_{txx}=(c_{1,n}u^2+c_{2,n}u_x^2+c_3uu_{xx})_x,
\end{equation}
where $\alpha, c_0, c_{1,n}, c_{2,n}, c_3 \in \m{R}$ are constants.  Other integrable members of the family \eqref{fam}
are the CH and the KdV equations.  Also, like the CH equation \eqref{ch}, the DP equation \eqref{dp} possesses peakon solutions of the form $u_c(x,t)=ce^{-|x-ct|}$.

Recently, Vladimir Novikov \cite{n}, in addition to 
several integrable equations with quadratic nonlinearities
like the CH and DP, generated 
about 10 integrable equations with cubic nonlinearities while investigating the integrability of Camassa-Holm type equations of the form 
\begin{equation}
\label{gen}
(1-\p_x^2)u_t=P(u,u_x,u_{xx},...),
\end{equation}
where $P$ is a polynomial of $u$ and its derivatives.  
One of these CH-type equations  with cubic nonlinearities, 
which happened to be a new integrable equation, is  the following
\begin{equation}
\label{ne}
(1-\p_x^2)u_t=u^2u_{xxx}+3uu_xu_{xx}-4u^2u_x.
\end{equation}
Equation \eqref{ne} is now called the Novikov equation (NE) and also possesses many properties exhibited by Camassa-Holm type equations.  One of which is the existence of peakon solutions of the form $u_c(x,t)=\sqrt{c}e^{-|x-ct|}$.  It is important to mention that the aforementioned equations are integrable; they possess infinitely many conserved quantities, an infinite hierarchy of quasi-local symmetries, a Lax pair and a bi-Hamiltonian structure.  In contrast, the FW equation is not integrable.

Our first result is concerned with the well-posedness for the FW equation \eqref{fw} in Besov spaces in both periodic and non-periodic cases.  We base this result primarily off of the methodology presented in Danchin's paper on the CH equation \cite{danchin}.  The result is stated as follows.

\begin{thm}
\label{wp}
If $s>\frac32$, $r \in (1,\infty)$ or $s \ge 3/2$, $r = 1$ and the initial data $u_0 \in B^s_{2,r}$, then there exists a lifetime $T>0$ and a unique solution $u \in C([-T,T]; B^s_{2,r})$ of the FW initial value problem, which depends continuously on the initial data. Additionally, we have the solution size and lifespan estimates
\begin{equation}
\label{sizeest}
\|u\|_{ B^s_{2,r}} \leq 2 \|u_0\|_{ B^s_{2,r}} , \quad \text{ and }\quad T\leq \frac{c}{ \|u_0\|_{ B^s_{2,r}}},
\end{equation}
where $c $ is a positive constant depending only upon $s$. Furthermore, when $r = \infty$, there exists a unique solution $u \in C([-T,T]; B^{s'}_{2,\infty})$, for any $s'<s$, which depends continuously upon the initial data and satisfies the same lifespan and energy estimates in 
$B^{s'}_{2,\infty}$.
\end{thm}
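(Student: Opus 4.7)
The plan is to work with the non-local formulation \eqref{fwnl}, namely
$$u_t + \tfrac{3}{2}uu_x = (1-\p_x^2)^{-1}\p_x u,$$
and to build the solution via a Friedrichs-type iteration scheme in the spirit of Danchin's treatment of the Camassa--Holm equation \cite{danchin}. The key structural observation is that $(1-\p_x^2)^{-1}\p_x$ is a smoothing operator of order $-1$ composed with $\p_x$, hence bounded on every $B^s_{2,r}$ with operator norm independent of $s$. Starting from $u^{(0)}\equiv 0$, I define $u^{(n+1)}$ as the unique solution of the linear transport problem
$$\p_t u^{(n+1)} + \tfrac{3}{2}\,u^{(n)}\,\p_x u^{(n+1)} = (1-\p_x^2)^{-1}\p_x u^{(n)}, \qquad u^{(n+1)}(0)=u_0,$$
which, given $u^{(n)}$, sits inside the standard Besov theory for linear transport equations.

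To obtain uniform control on $u^{(n)}$ I apply the Bahouri--Chemin--Danchin transport estimate in $B^s_{2,r}$ (valid precisely in the regimes $s>3/2$ with $r\in(1,\infty]$, or $s\ge 3/2$ with $r=1$, where $B^{s-1}_{2,r}\hookrightarrow L^\infty$ and the paraproduct estimate for $u\p_x v$ closes), together with $\|(1-\p_x^2)^{-1}\p_x u^{(n)}\|_{B^s_{2,r}}\lesssim \|u^{(n)}\|_{B^s_{2,r}}$. A Gronwall-type induction then yields $\|u^{(n)}\|_{L^\infty_T B^s_{2,r}}\le 2\|u_0\|_{B^s_{2,r}}$ on a time interval of length $T\le c/\|u_0\|_{B^s_{2,r}}$, giving the bounds \eqref{sizeest}. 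I then show $\{u^{(n)}\}$ is Cauchy in the weaker space $L^\infty_T B^{s-1}_{2,r}$: the difference $w^{(n)}=u^{(n+1)}-u^{(n)}$ satisfies a transport equation with drift $u^{(n)}$ and source $\tfrac{3}{2}w^{(n-1)}\p_x u^{(n)} + (1-\p_x^2)^{-1}\p_x w^{(n-1)}$, and the standard loss of one derivative in the transport estimate for differences forces the drop in regularity. A geometric-series style estimate delivers convergence $u^{(n)}\to u$ in $L^\infty_T B^{s-1}_{2,r}$; combining with the Fatou property in $B^s_{2,r}$ and interpolating with the uniform top-regularity bound places $u\in L^\infty_T B^s_{2,r}$ and yields $u^{(n)}\to u$ in $C([-T,T];B^{s'}_{2,r})$ for every $s'<s$. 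Time continuity at the full regularity is then recovered by reading $\p_t u$ off the equation.

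Uniqueness is immediate from the same difference estimate applied to two candidate solutions with identical initial data. For continuous dependence I intend to use a Bona--Smith argument: smooth the data via low-frequency truncation $u_0\mapsto S_N u_0$, control the regularized solutions uniformly in $B^s_{2,r}$ and with controlled blow-up in $B^{s+1}_{2,r}$, establish Lipschitz dependence in the weaker norm $B^{s-1}_{2,r}$, and interpolate to upgrade to strong continuity in $B^s_{2,r}$. When $r=\infty$ the space $B^s_{2,\infty}$ is not separable and strong time continuity at the top regularity is not expected; the same uniform estimates, together with convergence in $B^{s'}_{2,\infty}$ for $s'<s$, still deliver the weaker statement in the theorem.

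The main technical obstacle will be the passage from Lipschitz dependence in $B^{s-1}_{2,r}$ to strong continuity in $B^s_{2,r}$. The Bona--Smith procedure has to balance a blow-up of $\|S_N u_0\|_{B^{s+1}_{2,r}}$ against the approximation rate $\|u_0 - S_N u_0\|_{B^{s-1}_{2,r}}$ in a way that closes when the two regularized solutions are compared to the unregularized one. The endpoint cases $s=3/2$, $r=1$ and $r=\infty$ require additional care, since both the Besov transport estimate and the paraproduct decomposition of the nonlinearity $uu_x$ sit exactly at their boundary of validity there.
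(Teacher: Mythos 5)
Your proposal follows essentially the same route as the paper: the iteration on linear transport equations $\p_t u^{(n+1)}+\tfrac32 u^{(n)}\p_x u^{(n+1)}=(1-\p_x^2)^{-1}\p_x u^{(n)}$, uniform bounds from the Besov transport estimate giving $T\lesssim \|u_0\|_{B^s_{2,r}}^{-1}$ and the factor-of-two size bound, uniqueness via the Lipschitz estimate in $B^{s-1}_{2,r}$, and a Bona--Smith mollification argument for continuous dependence. The only real divergence is internal and minor: you pass to the limit by showing the iterates are Cauchy in $L^\infty_T B^{s-1}_{2,r}$ (Danchin's original device), whereas the paper mollifies the data at each iteration step and instead extracts a convergent subsequence via the Fatou property and Ascoli's theorem before upgrading to $C([0,T];B^s_{2,r})$ by dominated convergence; both are standard and equally valid here.
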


It is important to note that when $r=2$, this corresponds to the Sobolev space $H^s$ where well-posedness has been shown by Yin \cite{y} by applying Kato's semigroup approach.  Well-posedness in Sobolev spaces $H^s$ for $s>3/2$ was also shown by Holmes \cite{h2} where he employed a Galerkin type approximation argument as in Taylor \cite{t2}.  The same methodology was used by Himonas and Holliman for the CH equation \cite{hh3}, the DP equation \cite{hh1} and the Novikov equation \cite{hh2}.  

From our well-posedness result, we are also able to demonstrate that the dependence on the initial data is sharp, as summarized in the following theorem.

\begin{thm}
\label{nonunif}
If $s\geq \frac32$ and $r \in [1,\infty]$, then the data-to-solution map is not uniformly continuous from any bounded subset of $ B^s_{2,r} (\ci) $ to $C([-T,T]; B^s_{2,r}(\ci) )$. 
\end{thm}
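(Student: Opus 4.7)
The plan is to adapt the method of approximate solutions developed by Himonas and collaborators for related Camassa--Holm type equations. I would introduce the two one-parameter families
\[
u^{\omega,n}(x,t) := \omega n^{-1} + n^{-s}\cos\!\big(nx - \tfrac{3}{2}\omega t\big), \qquad \omega \in \{-1,+1\},\ n \in \mathbb{N},
\]
whose low-frequency constant piece $\omega n^{-1}$ transports the high-frequency oscillation at precisely the speed $\tfrac{3}{2}\omega$ that cancels the principal contribution of the nonlinearity $\tfrac{3}{2}uu_x$. A direct substitution into \eqref{fwnl} leaves the residual
\[
E^{\omega,n} := \partial_t u^{\omega,n} + \tfrac{3}{2}u^{\omega,n}\partial_x u^{\omega,n} - (1-\partial_x^2)^{-1}\partial_x u^{\omega,n} = -\tfrac{3}{4}n^{1-2s}\sin(2nx - 3\omega t) + \frac{n^{1-s}}{1+n^2}\sin\!\big(nx - \tfrac{3}{2}\omega t\big),
\]
whose only constituents are a self-interaction at frequency $2n$ and the smoothing nonlocal term, so $\|E^{\omega,n}\|_{B^{s-1}_{2,r}} \lesssim n^{-s}+n^{-2}$. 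Elementary frequency-space computations also yield $\|u^{\omega,n}(\cdot,0)\|_{B^s_{2,r}} \lesssim 1$, the vanishing of the initial-data distance $\|u^{1,n}(\cdot,0)-u^{-1,n}(\cdot,0)\|_{B^s_{2,r}} = 2n^{-1} \to 0$, and, via a product-to-sum identity, the separation
\[
\|u^{1,n}(\cdot,t) - u^{-1,n}(\cdot,t)\|_{B^s_{2,r}} = \|2n^{-1} + 2n^{-s}\sin(nx)\sin(\tfrac{3}{2}t)\|_{B^s_{2,r}} \ge c\,|\sin(\tfrac{3}{2}t)|
\]
for all $n$ large and $t \in (0,T]$.

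To pass from approximate to actual solutions, let $u_n^\omega \in C([-T,T]; B^s_{2,r})$ be the solution furnished by Theorem~\ref{wp} with initial datum $u^{\omega,n}(\cdot,0)$; by the uniform boundedness in (a) and the size estimate \eqref{sizeest}, both the lifespan $T>0$ and the norm $\|u_n^\omega\|_{L^\infty_T B^s_{2,r}}$ can be taken uniform in $n$ and $\omega$. The difference $v^{\omega,n} := u_n^\omega - u^{\omega,n}$ satisfies
\[
\partial_t v^{\omega,n} + \tfrac{3}{2}u_n^\omega \partial_x v^{\omega,n} + \tfrac{3}{2}v^{\omega,n}\partial_x u^{\omega,n} = (1-\partial_x^2)^{-1}\partial_x v^{\omega,n} - E^{\omega,n}, \qquad v^{\omega,n}|_{t=0}=0.
\]
A standard Besov energy estimate at regularity $s-1$, using the algebra property of $B^{s-1}_{2,r}$ (valid when $s>3/2$, or when $s=3/2$ with $r=1$) to absorb $v^{\omega,n}\partial_x u^{\omega,n}$ and exploiting the smoothing character of $(1-\partial_x^2)^{-1}\partial_x$, yields via Gronwall
\[
\|v^{\omega,n}(t)\|_{B^{s-1}_{2,r}} \lesssim n^{-s} + n^{-2}.
\]
Separately, propagation of regularity in Danchin's framework applied to the datum $u^{\omega,n}(\cdot,0) \in B^{s+1}_{2,r}$, whose norm is $O(n)$, together with the uniform $B^s_{2,r}$ bound on $u_n^\omega$, produces $\|v^{\omega,n}\|_{L^\infty_T B^{s+1}_{2,r}} \lesssim n$; the standard Besov interpolation inequality then gives
\[
\|v^{\omega,n}(t)\|_{B^s_{2,r}} \lesssim \|v^{\omega,n}\|_{B^{s-1}_{2,r}}^{1/2}\|v^{\omega,n}\|_{B^{s+1}_{2,r}}^{1/2} \lesssim n^{(1-s)/2} \longrightarrow 0.
\]

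Combining these ingredients by the triangle inequality, for any fixed $t \in (0,T]$,
\[
\liminf_{n\to\infty}\|u_n^1(\cdot,t) - u_n^{-1}(\cdot,t)\|_{B^s_{2,r}} \ge c\,|\sin(\tfrac{3}{2}t)| > 0,
\]
while the initial data satisfy $\|u^{1,n}(\cdot,0) - u^{-1,n}(\cdot,0)\|_{B^s_{2,r}} \to 0$, contradicting uniform continuity of the data-to-solution map on the bounded set $\{u^{\pm 1,n}(\cdot,0)\}$. The main technical obstacle is the energy estimate at the top scale: a direct bound on $\|v^{\omega,n}\|_{B^s_{2,r}}$ fails because $\|\partial_x u^{\omega,n}\|_{B^s_{2,r}} \sim n$ produces an unacceptable Gronwall factor $e^{Cnt}$, forcing the argument one derivative below the data regularity and necessitating a $B^{s+1}_{2,r}$ propagation-of-regularity bound with constants independent of $n$ to close the interpolation loop. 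The endpoint $r=\infty$ is handled by the same construction in the $B^{s'}_{2,\infty}$ solution class with $s'<s$ provided by Theorem~\ref{wp}, since the lower bound on the separation persists under the embedding $B^s_{2,\infty} \hookrightarrow B^{s'}_{2,\infty}$.
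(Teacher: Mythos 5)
Your proposal is correct and follows essentially the same route as the paper: a low-frequency carrier plus a high-frequency wave tuned so the transport term cancels the time derivative, a residue bound, an error estimate one derivative below $s$ combined with a persistence bound above $s$ and Besov interpolation, and a product-to-sum identity for the separation $\gtrsim |\sin t|$; the paper interpolates between $B^{\gamma}_{2,\infty}$ with $\gamma<s-1$ and $B^{s_2}_{2,\infty}$ with $s_2>s$ rather than your $B^{s\pm1}_{2,r}$, which is immaterial. One small correction to your closing remark: for $r=\infty$ the separation must still be measured in $B^{s}_{2,\infty}$ itself, since the embedding $B^{s}_{2,\infty}\hookrightarrow B^{s'}_{2,\infty}$ gives an upper rather than a lower bound and $\|u^{1,n}-u^{-1,n}\|_{B^{s'}_{2,\infty}}\sim n^{s'-s}\to 0$; this is harmless because your estimates at the levels $s-1$, $s$, $s+1$ already apply verbatim when $r=\infty$, which is exactly the case the paper treats in detail.
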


To demonstrate this sharpness of continuity, sequences
of approximate solutions are constructed. Then the
actual solutions are found by solving the Cauchy problem with initial data given by the
approximate solutions at time $t = 0$.  The error produced by solving the Cauchy problem using approximate solutions is shown to be inconsequential.  This was motivated by the works of Himonas and Holliman \cite{hh1,hh2, hh3}, Himonas and Kenig \cite{hk}, Himonas, Kenig and Misio\l ek \cite{hkm}, and Himonas, Misio\l ek and Ponce \cite{hmp}, Thompson \cite{t} and Grayshan \cite{g} on CH-type equations.  Furthermore, Holmes investigated this result for the FW equation in Sobolev spaces in \cite{h2}.

Following our proof of Theorem \ref{wp}, we shall expand our range of continuity properties by considering weaker topologies on $B_{2,r}^s$.  Although the data-to-solution map is not uniformly continuous on $B_{2,r}^s$, we will show that this map is H\"{o}lder continuous if we choose a properly weakened topology.  This is summarized in the following result.

\begin{thm}
\label{holder}
If $s\geq \frac32 $, $r \in [1,\infty]$ and $u_0 \in B^s_{2,r}$, then for any $s-1\le \alpha<s$, the data-to-solution map is H\"older continuous from any bounded subset of $ B^{\alpha}_{2,r}$ to $C([-T,T]; B^\alpha_{2,r})$. 
\end{thm}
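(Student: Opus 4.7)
The strategy is classical: I first establish a Lipschitz bound for the difference of two solutions in the weaker space $B^{s-1}_{2,r}$, then interpolate against the uniform $B^s_{2,r}$-control supplied by Theorem \ref{wp}.

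Let $u_0,v_0$ lie in a $B^s_{2,r}$-ball of radius $M$, write $u,v$ for the corresponding solutions and set $w=u-v$. Using the nonlocal form \eqref{fwnl} together with the identity $uu_x-vv_x=u w_x+v_x w$, the difference $w$ solves the transport equation
\[
w_t+\tfrac{3}{2}\,u\,w_x = -\tfrac{3}{2}\,v_x\,w+(1-\p_x^2)^{-1}\p_x w,\qquad w(\cdot,0)=u_0-v_0.
\]
The first step is to apply to this equation the linear Besov transport estimate at regularity $\alpha_0=s-1$ (the same estimate underlying Theorem \ref{wp}). Since $B^{s-1}_{2,r}$ is an algebra throughout the parameter range considered, the forcing satisfies $\|v_x w\|_{B^{s-1}_{2,r}}\lesssim \|v\|_{B^s_{2,r}}\|w\|_{B^{s-1}_{2,r}}$, and since $(1-\p_x^2)^{-1}\p_x$ is a Fourier multiplier of order $-1$, one also has $\|(1-\p_x^2)^{-1}\p_x w\|_{B^{s-1}_{2,r}}\lesssim \|w\|_{B^{s-1}_{2,r}}$. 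Combining these with Gronwall's lemma yields the Lipschitz-type bound
\[
\|w(t)\|_{B^{s-1}_{2,r}} \le C\,\|u_0-v_0\|_{B^{s-1}_{2,r}},\qquad t\in[-T,T],
\]
with $C=C(M,s,T)$.

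The second step is interpolation. Theorem \ref{wp} gives $\|w(t)\|_{B^s_{2,r}}\le 4M$ on $[-T,T]$. For any $\alpha\in[s-1,s)$, writing $\alpha=(1-\theta)(s-1)+\theta s$ with $\theta=\alpha-s+1$, the standard Besov interpolation inequality
\[
\|w(t)\|_{B^\alpha_{2,r}} \lesssim \|w(t)\|_{B^{s-1}_{2,r}}^{\,s-\alpha}\,\|w(t)\|_{B^{s}_{2,r}}^{\,\alpha-s+1}
\]
combined with the embedding $B^\alpha_{2,r}\hookrightarrow B^{s-1}_{2,r}$ applied to the data produces
\[
\|w(t)\|_{B^\alpha_{2,r}} \le C\,\|u_0-v_0\|_{B^\alpha_{2,r}}^{\,s-\alpha},
\]
which is Hölder continuity with exponent $s-\alpha\in(0,1]$ on bounded subsets of the data.

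The hard part is the Lipschitz estimate at the borderline regularity $\alpha_0=s-1$: the transport estimate with a velocity of only $B^s_{2,r}$-regularity forces one through Bony's paraproduct decomposition and commutator bounds $[\D_q,u\p_x]w$ between dyadic Littlewood-Paley blocks and the advection term. This machinery is precisely the one built up in \cite{danchin} for CH and already re-used for Theorem \ref{wp}; the novelty here is only that the forcing terms on the right are algebra-bounded or smoother. A minor additional subtlety is the case $r=\infty$, where Theorem \ref{wp} supplies continuity only into $B^{s'}_{2,\infty}$ for $s'<s$; one then runs the argument with $s$ replaced by $s'$ and lets $s'\uparrow s$, the constants being uniform on compact subsets of $(s-1,s)$.
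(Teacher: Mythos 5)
Your proposal is correct and follows essentially the same route as the paper: the Lipschitz estimate for $w=u-v$ in $B^{s-1}_{2,r}$ obtained from the transport equation for the difference (Proposition 3.1 in the paper), combined with the $B^{s}_{2,r}$ size bound from the well-posedness theorem and Besov interpolation, yielding the H\"older exponent $s-\alpha$. The only cosmetic difference is that you write the difference equation with velocity $\tfrac32 u$ and forcing $-\tfrac32 v_x w$ rather than the symmetric form used in the paper, and you spell out the $r=\infty$ limiting argument that the paper leaves implicit.
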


We also prove the uniqueness and existence of real analytic solutions.  Methodology employed here follows that of Himonas and Misio\l ek in \cite{hm4} where they show analyticity of solutions for the Camassa-Holm equation \eqref{ch}.  This is given by the following result.

\begin{thm}
\label{anal}
If the initial data $u_0$ is analytic on $\ci$ then there exists an $\ee >0$ and a unique solution $u(x,t)$ to the Cauchy problem for the FW equation \eqref{fwnl} that is analytic in both $x$ and $t$ on $\ci$ for all $t$ in $(-\ee,\ee)$.
\end{thm}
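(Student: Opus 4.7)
The plan is to apply an abstract Ovsyannikov--Nishida Cauchy-Kovalevsky theorem in a decreasing scale of Banach spaces of analytic functions on $\ci$, following the strategy of Himonas--Misio\l ek \cite{hm4} for the Camassa--Holm equation.

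For $0 < s \leq 1$, introduce the scale
$$E_s = \Big\{ f \in C^\infty(\ci) : \|f\|_s := \sum_{k=0}^\infty \frac{s^k}{k!}\|\p_x^k f\|_{H^2(\ci)} < \infty \Big\},$$
so that $f \in E_s$ iff $f$ extends holomorphically to the complex strip $\{|\text{Im}\,z|<s\}$ with $H^2$ traces. The family is decreasing, with $\|\cdot\|_{s'}\leq\|\cdot\|_s$ for $s'<s$; each $E_s$ is a Banach algebra (inherited from $H^2(\ci)$); and one checks the Cauchy-type estimate
$$\|\p_x f\|_{s'} \leq \frac{C}{s-s'}\|f\|_s, \qquad 0<s'<s\leq 1.$$
Since $u_0$ is real analytic on $\ci$, $u_0 \in E_{s_0}$ for some $s_0\in(0,1]$.

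Rewriting \eqref{fwnl} as $u_t = F(u)$ with
$$F(u) = -\tfrac{3}{2} u u_x + (1-\p_x^2)^{-1}\p_x u,$$
I would verify the two Ovsyannikov hypotheses. The second summand is a Fourier multiplier of order $-1$ that commutes with $\p_x$, hence maps $E_s\to E_s$ boundedly without loss of analyticity radius. For the nonlinear term, writing $uu_x-vv_x = \tfrac{1}{2}\p_x\bigl((u+v)(u-v)\bigr)$ and combining the algebra property with the Cauchy derivative estimate yields
$$\|F(u) - F(v)\|_{s'} \leq \frac{C}{s-s'}\bigl(1+\|u\|_s+\|v\|_s\bigr)\|u-v\|_s, \qquad 0<s'<s\leq s_0,$$
together with $F(0)=0$. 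These are exactly the hypotheses of the abstract Cauchy-Kovalevsky theorem (Nishida / Baouendi--Goulaouic form), which supplies a $T>0$ and a unique solution
$$u \in C^1\Big(\bigl(-T(s_0-s)/s_0,\,T(s_0-s)/s_0\bigr);\,E_s\Big)$$
for every $s\in(0,s_0)$. Taking $s=s_0/2$ produces an $\ee>0$ for which $x\mapsto u(x,t)$ is analytic on $\ci$ whenever $|t|<\ee$.

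Joint analyticity in $(x,t)$ then follows from a standard inductive bootstrap: differentiating $u_t=F(u)$ repeatedly in $t$ and iterating the Lipschitz bound above produces $\|\p_t^n u(\cdot,t)\|_{s_0/4}\leq M^n n!$ on a possibly smaller time interval, so the Taylor series of $u$ in $t$ converges in an $E_\delta$-neighborhood of every $t_0$; this forces $u$ to be real analytic in both variables. Uniqueness is inherited from the Lipschitz estimate on $F$ via Gronwall's inequality in the scale. The main obstacle is proving the bilinear estimate on $uu_x-vv_x$ with the correct $(s-s')^{-1}$ dependence uniformly in $s,s'$: one must carefully pair the algebra inequality $\|fg\|_s\leq\|f\|_s\|g\|_s$ with the Cauchy loss-of-derivative estimate so that a single factor $1/(s-s')$ appears — it is precisely this factor that the abstract Cauchy-Kovalevsky framework absorbs to yield the positive-time interval of existence.
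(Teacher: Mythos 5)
Your proposal follows essentially the same route as the paper: both verify the hypotheses of the abstract Ovsyannikov--Nishida/Baouendi--Goulaouic theorem on a decreasing scale of analytic-function spaces modeled on \cite{hm4}, using the algebra property, the Cauchy-type $\frac{C}{s-s'}$ estimate for $\p_x$, and the boundedness of $\Lambda^{-2}\p_x$ to get the Lipschitz bound on $F(u)=-\frac32 uu_x+\Lambda^{-2}\p_x u$. The only differences are cosmetic (the paper bases its scale on $B^{3/2}_{2,1}$ with a sup-type norm rather than your $H^2$ sum-type norm, and it reads holomorphy in $t$ directly off the abstract theorem rather than via your Taylor-series bootstrap), so the argument is correct and matches the paper's.
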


Finally, we prove the following blow-up criterion.  The proof of which will follow a similar argument to one presented in \cite{danchin}.

\begin{thm}
\label{blowup}
Let $u_0 \in B_{p,r}^s$ as in Theorem \ref{wp} and $u$ be the corresponding solution to the Cauchy problem \eqref{fw}.  Assume that $T^*$ is the maximal time of existence of the solution to the aforementioned Cauchy problem.  If
\[
T^*< \infty, \ \ \ \text{then} \ \ \int_0^{T^*}\|u_x\|_{L^\infty}=+\infty.
\]
\end{thm}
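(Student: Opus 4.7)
The plan is to argue by contrapositive. Assume $T^* < \infty$ but $M := \int_0^{T^*}\|u_x(\tau)\|_{L^\infty}\,d\tau < \infty$; I shall show the solution can be extended past $T^*$, contradicting maximality. Read the nonlocal form \eqref{fwnl} as a transport-type equation
$$u_t + \tfrac{3}{2}u\,u_x = F(u), \qquad F(u) := (1-\p_x^2)^{-1}\p_x u,$$
in which the velocity is $\tfrac{3}{2}u$ and the right-hand side is a smoothing term (of order $-1$ in $u$).

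First I would apply the Littlewood--Paley block $\Delta_q$ to obtain
$$\p_t(\Delta_q u) + \tfrac{3}{2}u\,\p_x(\Delta_q u) = \Delta_q F(u) + \tfrac{3}{2}\bigl[u,\Delta_q\bigr]\p_x u,$$
then multiply by $\Delta_q u$, integrate in $x$, and integrate by parts on the transport term, which generates a factor $\|u_x\|_{L^\infty}$. Dividing by $\|\Delta_q u\|_{L^2}$, multiplying by $2^{qs}$, and taking $\ell^r$ norms in $q$ assembles a Besov estimate whose key nonlinear input is Danchin's commutator bound
$$\Bigl\|\bigl(2^{qs}\|[u,\Delta_q]\p_x u\|_{L^2}\bigr)_q\Bigr\|_{\ell^r} \leq C\,\|u_x\|_{L^\infty}\,\|u\|_{B^s_{2,r}},$$
valid in the range of Theorem \ref{wp}. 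Since the Fourier multiplier $i\xi/(1+\xi^2)$ is bounded, $\|F(u)\|_{B^s_{2,r}} \leq C\|u\|_{B^s_{2,r}}$, and the combined estimate takes the form
$$\frac{d}{dt}\|u(t)\|_{B^s_{2,r}} \leq C\bigl(1+\|u_x(t)\|_{L^\infty}\bigr)\,\|u(t)\|_{B^s_{2,r}}.$$

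Gronwall's inequality then yields $\|u(t)\|_{B^s_{2,r}} \leq \|u_0\|_{B^s_{2,r}}\exp\!\bigl(C\,t + C\int_0^t\|u_x(\tau)\|_{L^\infty}\,d\tau\bigr)$, so on $[0,T^*)$ the Besov norm is bounded by some $N < \infty$. By the lifespan estimate in \eqref{sizeest}, starting from any $t_0 \in [0,T^*)$ with initial datum $u(t_0)$ one obtains a solution on a time interval of length at least $c/N$, independent of $t_0$. Choosing $t_0$ so close to $T^*$ that $T^* - t_0 < c/(2N)$ and using uniqueness to splice the two local solutions produces an extension past $T^*$, contradicting the maximality of $T^*$.

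The main obstacle is the commutator estimate itself: one must arrange matters so that only $\|u_x\|_{L^\infty}$, rather than a higher norm of $u$, appears as the prefactor multiplying $\|u\|_{B^s_{2,r}}$. This is handled by Bony's paraproduct decomposition applied to $u\,\p_x u$: the paraproducts $T_u\p_x u$ and $T_{\p_x u}u$ and the remainder $R(u,\p_x u)$ are each estimated by standard continuity of paraproducts in Besov spaces, with $\|u_x\|_{L^\infty}$ carrying the low-frequency factors. Once this input is secured, the rest of the proof is a routine Gronwall plus continuation argument in the style of Danchin \cite{danchin}.
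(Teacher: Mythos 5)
Your proposal is correct and follows essentially the same route as the paper: apply $\Delta_q$ to the nonlocal form, invoke the Danchin/Bahouri--Chemin--Danchin commutator estimate to get the prefactor $\|u_x\|_{L^\infty}$, bound $\Lambda^{-1}\p_x$ as an operator of order $-1$, and close with Gronwall to show the $B^s_{2,r}$ norm stays finite up to $T^*$, contradicting maximality. Your write-up is in fact slightly more explicit than the paper's at the final step, where you use the lifespan estimate \eqref{sizeest} and uniqueness to splice an extension past $T^*$, but the argument is the same.
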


The paper is structured as follows.  In Section two, we provide some preliminary results and lemmas that are used throughout the rest of the document.  In sections three and four, we prove the well-posedness result as stated in Theorem \ref{wp} along with Theorem \ref{holder}.  Then in Section five, we prove nonuniform dependence upon the initial data as stated in Theorem \ref{nonunif}.  Finally, in Section 6 and 7 we prove analyticity of solutions and blow-up criterion as stated in Theorem \ref{anal} and Theorem \ref{blowup} respectively.

%
%
%
%
\section{Preliminaries}
In this section, we will recall some conclusions on the properties of Littlewood-Paley decomposition, the Besov spaces and the theory of the transport equation which can be seen in \cite{chem}, \cite{danchin, d2, d3}, \cite{vishik}.

\begin{lemma}
(Littlewood-Paley decomposition).  There exists a couple of smooth radial functions $(\chi,\vp)$ valued in $[0,1]$ such that $\chi$ is supported in the ball $B=\{\xi \in \m{R}^n,|\xi|\leq \frac43\}$ and $\vp$ is supported in the ring $C=\{\xi \in \m{R}^n,\frac34\leq|\xi|\leq \frac83\}$.  Moreover,
\[
\forall \xi \in \m{R}^n, \ \ \ \chi(\xi)+\sum_{q \in \m{N}}\vp(2^{-q}\xi)=1
\]
and
\[
supp \ \vp(2^{-q}\cdot) \ \cap \ supp \ \vp(2^{-q'}\cdot) = \emptyset, \ \ if \ \ |q-q'|\geq 2,
\]
\[
supp \ \chi(\cdot) \ \cap \ supp \ \vp(2^{-q}\cdot) = \emptyset, \ \ if \ \ |q|\geq 1.
\]
Then for $u \in \mathcal{S}'(\m{R})$ the nonhomogeneous dyadic blocks are defined as follows:
\begin{align*}
&\Delta_qu=0, \ \ if \ \ q \leq -2, \\
&\Delta_{-1}u=\chi(D)u = \mathscr{F}_x^{-1}\chi\mathscr{F}u, \\
&\Delta_qu=\vp(2^{-q}D)=\mathscr{F}_x^{-1}\vp(2^{-q}\xi)\mathscr{F}u, \ \ if \ \ q\geq 0.
\end{align*}
Thus $u=\sum_{q \in \m{Z}}\Delta_qu$ in $\mathcal{S}'(\m{R})$.
\end{lemma}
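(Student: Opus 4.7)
The plan is to carry out the standard dyadic construction by first producing $\chi$ as a bump adapted to the ball $B$ and then \emph{defining} $\varphi$ as a difference of dilates of $\chi$; the partition of unity, the support conditions, and the $\mathcal{S}'$-convergence will then all follow from essentially algebraic manipulations.

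First I would construct $\chi$. Choose a smooth nondecreasing cutoff $\psi\in C^\infty([0,\infty))$ with $\psi(t)=1$ for $t\le 3/4$ and $\psi(t)=0$ for $t\ge 4/3$, obtained in the usual way by convolving a characteristic function with a mollifier, and set $\chi(\xi)=\psi(|\xi|)$. Then $\chi$ is smooth (it is constant near the origin, so there is no singularity at $\xi=0$), radial, valued in $[0,1]$, identically $1$ on $\{|\xi|\le 3/4\}$ and supported in $B=\{|\xi|\le 4/3\}$.

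Next I would \emph{define}
\[
\varphi(\xi):=\chi(\xi/2)-\chi(\xi).
\]
Since $\chi(\xi)=1$ on $|\xi|\le 3/4$ and $\chi(\xi/2)$ is supported in $|\xi|\le 8/3$, one checks directly that $\varphi$ is smooth, radial, valued in $[0,1]$, and supported in the ring $C=\{3/4\le|\xi|\le 8/3\}$. The supports of $\varphi(2^{-q}\cdot)$ and $\varphi(2^{-q'}\cdot)$ lie in the rings $\{3\cdot 2^{q-2}\le|\xi|\le 2^{q+3}/3\}$, and an elementary comparison of endpoints shows these rings are disjoint as soon as $|q-q'|\ge 2$; the same computation gives $\operatorname{supp}\chi\cap\operatorname{supp}\varphi(2^{-q}\cdot)=\emptyset$ for $|q|\ge 1$. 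For the partition of unity, I would telescope:
\begin{equation*}
\chi(\xi)+\sum_{q=0}^{N}\varphi(2^{-q}\xi)=\chi(\xi)+\sum_{q=0}^{N}\bigl(\chi(2^{-q-1}\xi)-\chi(2^{-q}\xi)\bigr)=\chi(2^{-N-1}\xi),
\end{equation*}
and since $\chi$ is continuous with $\chi(0)=1$, the right side tends to $1$ as $N\to\infty$ for every $\xi\in\mathbb{R}^n$, giving the identity $\chi(\xi)+\sum_{q\in\mathbb{N}}\varphi(2^{-q}\xi)=1$.

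Finally, for $u\in\mathcal{S}'(\mathbb{R})$ I would define $\Delta_q u$ as stated and show that $\sum_{q\in\mathbb{Z}}\Delta_q u=u$ in $\mathcal{S}'$. With the convention $\Delta_q u=0$ for $q\le-2$, the telescoping above gives, applied on the Fourier side,
\begin{equation*}
\sum_{q=-1}^{N}\Delta_q u=\mathcal{F}_x^{-1}\bigl[\chi(2^{-N-1}\xi)\,\mathcal{F}u\bigr].
\end{equation*}
Duality then reduces the claim to showing that $\chi(2^{-N-1}\xi)\varphi(\xi)\to\varphi(\xi)$ in $\mathcal{S}(\mathbb{R})$ for every $\varphi\in\mathcal{S}$, which is standard since $\chi(2^{-N-1}\xi)=1$ on any fixed compact set once $N$ is large and derivatives of $\chi(2^{-N-1}\cdot)$ acquire small factors $2^{-(N+1)|\alpha|}$. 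There is no real obstacle in this proof; the only point requiring mild care is the simultaneous calibration of the numerical constants in the support conditions for $\chi$ and $\varphi$, and this is exactly what is guaranteed by taking $\psi=1$ on $[0,3/4]$ and $\psi=0$ on $[4/3,\infty)$.
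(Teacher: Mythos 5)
Your construction is essentially correct, but note that the paper does not prove this lemma at all: it is stated as a recalled preliminary, with the proof deferred to the cited references (Chemin; Danchin; Bahouri--Chemin--Danchin), and the argument you give -- build a radial cutoff $\chi$, set $\varphi(\xi)=\chi(\xi/2)-\chi(\xi)$, telescope, and check the ring geometry -- is precisely the standard construction found there. Two small slips are worth fixing. First, your $\psi$ must be non\emph{increasing} (it equals $1$ near $0$ and $0$ for $t\ge 4/3$); monotonicity is what guarantees $\varphi=\psi(|\xi|/2)-\psi(|\xi|)\ge 0$, so the direction matters. Second, the support condition with $\chi$ genuinely holds only for $q\ge 1$, not for $|q|\ge 1$ as stated: for $q=-1$ the ring $\operatorname{supp}\varphi(2\,\cdot)\subset\{3/8\le|\xi|\le 4/3\}$ meets the ball $\{|\xi|\le 4/3\}$, so ``the same computation'' does not give the claim for negative $q$. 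This is an inaccuracy inherited from the lemma's statement (the standard references assert it for $q\ge 1$) and is immaterial to the decomposition, since $\varphi(2^{-q}\cdot)$ with $q<0$ never enters the definition of the blocks $\Delta_q$. The remaining steps -- the locally finite telescoping sum equalling $\chi(2^{-N-1}\xi)$, and the reduction of $\sum_q\Delta_q u=u$ in $\mathcal{S}'$ to the convergence $\chi(2^{-N-1}\cdot)\phi\to\phi$ in $\mathcal{S}$ -- are sound, though for the latter you should make explicit that the required decay of the Schwartz seminorms on the set $\{|\xi|\gtrsim 2^{N}\}$ where $\chi(2^{-N-1}\xi)\ne 1$ comes from the rapid decay of $\phi$ there, not merely from convergence on compact sets.
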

\textbf{Remark.}  The low frequency cut-off $S_q$ is defined by
\[
S_qu=\sum_{p=-1}^{q-1}\Delta u = \chi(2^{-q}D)u = \mf_x^{-1}\chi(2^{-q}\xi)\mf_xu, \ \ \forall q \in \nn.
\]
We can see that
\[
\D_p\D_qu\equiv 0, \ \ if \ \ |p-q|\geq 2,
\]
\[
\ \ \ \ \ \  \ \ \ \ \D_q(S_{p-1}u\D_pv)\equiv 0, \ \ if \ \ |p-q| \geq 5, \ \ \forall u,v \in \ms'(\rr)
\]
as well as
\[
\|\D_qu\|_{L^p} \leq \|u\|_{L^p}, \ \ \ \ \|S_qu\|_{L^p} \leq C\|u\|_{L^p}, \ \ \ \forall p \in [1,\infty]
\]
with the aid of Young's Inequality, where $C$ is a positive constant independent of $q$.

\begin{defin}
(Besov Spaces)
Let $s \in \rr$, $p,r \in [1 , \infty]$ and $u \in S'(\rr)$. 
Then we define the Besov space of functions as
$$
B^s_{p,r} = B^s_{p,r}(\rr) = \{ u \in S'(\rr) : \|u\|_{B^s_{p,r} }<\infty\} , 
$$
where
$$
\| u\|_{B^s_{p,r} }\dot = 
\begin{cases}
\left( \sum_{q \ge -1} (2^{sq} \| \Delta _q u \|_{L^p})^r \right)^{1/r} &\text{ if } 1 \le r< \infty
\\
\sup_{q\ge -1} 2^{sq}\|\Delta _q u \|_{L^p} &\text{ if } r= \infty.
\end{cases}
$$
In particular, $B_{p,r}^\infty=\bigcap_{s \in \rr}B_{p,r}^s$.
\end{defin}
\begin{lemma}
Let $s \in \rr$, $1\leq p,r,p_j,r_j \leq \infty$, $j=1,2$, then
\begin{enumerate}
\item  Topological properties:  $B_{p,r}^s$ is a Banach space which is continuously emedded in $\ms'(\rr)$.

\item  Density:  $C_c^\infty$ is dense in $B_{p,r}^s$ $\iff$ $p,r \in [1,\infty)$. 

\item  Embedding:  $B_{p_1,r_1}^s \hookrightarrow B_{p_2,r_2}^{s-n(\frac1p_1-\frac1p_2)}$, if $p_1 \leq p_2$ and $r_1 \leq r_2$.
\[
B_{p,r_2}^{s_2} \hookrightarrow B_{p,r_1}^{s_1} \ \ \ \text{locally compact if} \ \ s_1<s_2.
\]

\item  Algebraic properties:  $\forall s >0$, $B_{p,r}^s\cap L^\infty$ is a Banach algebra.  $B_{p,r}^s$ is a Banach algebra $\iff$ $B_{p,r}^s \hookrightarrow L^\infty \iff s>\frac1p$ or $(s \geq \frac1p \  \text{and} \ r=1)$.  In particular, $B_{2,1}^{1/2}$ is continuously embedded in $B_{2,\infty}^{1/2} \cap L^\infty$ and $B_{2,\infty}^{1/2} \cap L^\infty$ is a Banach algebra.

\item  1-D Moser-type estimates:  

(i)  For $s>0$,
\[
\|fg\|_{B_{p,r}^s} \leq C(\|f\|_{B_{p,r}^s}\|g\|_{L^\infty}+\|f\|_{L^\infty}\|g\|_{B_{p,r}^s}).
\]

(ii)  $\forall s_1 \leq\frac1p < s_2 (s_2 \geq \frac1p \ if \ r=1)$ and $s_1+s_2>0$, we have 
\[
\|fg\|_{B_{p,r}^{s_1}} \leq C\|f\|_{B_{p,r}^{s_1}}\|g\|_{B_{p,r}^{s_2}}.
\]

\item  Complex interpolation:
\[
\|f\|_{B_{p,r}^{\theta s_1+(1-\theta)s_2}} \leq \|f\|_{B_{p,r}^{s_1}}^\theta\|g\|_{B_{p,r}^{s_2}}^{1-\theta}, \ \ \ \forall f \in B_{p,r}^{s_1} \cap B_{p,r}^{s_2}, \ \ \forall \theta \in [0,1].
\]

\item  Real interpolation:  $\forall \theta \in (0,1), s_1>s_2, s=\theta s_1+(1-\theta)s_2$, there exists a constant $C$ such that
\[
\|u\|_{B_{p,1}^s} \leq \frac{C(\theta)}{s_1-s_2}\|u\|_{B_{p,\infty}^{s_1}}^\theta\|u\|_{B_{p,\infty}^{s_2}}^{1-\theta}, \ \ \forall u \in B_{p,\infty}^{s_1}.
\]
In particular, for any $\theta \in (0,1)$, we have that
\[
\|u\|_{B_{2,1}^{1/2}} \leq \|u\|_{B_{2,1}^{\frac32-\theta}} \leq C(\theta)\|u\|_{B_{2,\infty}^{1/2}}^\theta\|u\|_{B_{2,\infty}^{3/2}}^{1-\theta}.
\]

\item  Fatou lemma:  If $\{u_n\}_{n \in \nn}$ is bounded in $B_{p,r}^s$ and $u_n \to u$ in $\ms'(\rr)$, then $u \in B_{p,r}^s$ and 
\[
\|u\|_{B_{p,r}^s} \leq \liminf_{n \to \infty}\|u_n\|_{B_{p,r}^s}.
\]
\end{enumerate}
\end{lemma}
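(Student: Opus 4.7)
The plan is to establish each of the eight items of the lemma by standard Littlewood–Paley machinery, organizing the proof around the dyadic blocks $\Delta_q u$ from Lemma 2.1 together with Bony's paraproduct decomposition.

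First, for the topological and embedding items (1)–(3), completeness of $B^s_{p,r}$ reduces to completeness of the weighted mixed-norm space in which the sequence $(2^{sq}\Delta_q u)_{q\geq -1}$ lives; the continuous embedding into $\ms'$ follows from a bound $|\langle \Delta_q u,\phi\rangle|\leq C\, 2^{qN}\|\Delta_q u\|_{L^p}\|\phi\|_{\ms}$ for a suitable Schwartz seminorm of $\phi$, which makes $u=\sum_q \Delta_q u$ converge in $\ms'$. Density of $C_c^\infty$ when $p,r<\infty$ follows by truncating the dyadic sum at $|q|\leq N$ and then mollifying; failure when $p=\infty$ or $r=\infty$ is seen on a nonzero constant. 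The embedding $B^s_{p_1,r_1}\hookrightarrow B^{s-n(1/p_1-1/p_2)}_{p_2,r_2}$ is a direct consequence of Bernstein's inequality $\|\Delta_q u\|_{L^{p_2}}\leq C\, 2^{qn(1/p_1-1/p_2)}\|\Delta_q u\|_{L^{p_1}}$ together with $\ell^{r_1}\hookrightarrow \ell^{r_2}$; the locally compact embedding for $s_1<s_2$ comes from Rellich-type compactness of each low-frequency dyadic block plus smallness of the high-frequency tail gained from the $s_1<s_2$ gap.

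For the algebra and Moser-type estimates (4)–(5), the key tool is Bony's decomposition $fg=T_fg+T_gf+R(f,g)$, with $T_fg=\sum_q S_{q-1}f\,\Delta_q g$ and remainder $R(f,g)=\sum_{|q-q'|\leq 1}\Delta_q f\,\Delta_{q'}g$. Each paraproduct can be bounded in $B^s_{p,r}$ using only an $L^\infty$ bound on one factor and a $B^s_{p,r}$ bound on the other, because the spectral support of $S_{q-1}f\,\Delta_q g$ lies in an annulus of size $2^q$; the remainder is bounded for $s>0$ since $\Delta_q R(f,g)$ vanishes unless the summation indices are at least of order $q$, allowing the tail to be summed against the weight $2^{qs}$. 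This yields (5)(i), and taking $f=g$ gives the algebra property in (4); the characterization $B^s_{p,r}\hookrightarrow L^\infty$ follows from Bernstein together with summability of $\sum_q 2^{q(1/p-s)}$ in $\ell^{r'}$. The asymmetric estimate (5)(ii) is proved similarly, using the embedding $B^{s_2}_{p,r}\hookrightarrow L^\infty$ (available from the hypothesis on $s_2$) to control the paraproducts, and $s_1+s_2>0$ to sum the remainder.

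For the interpolation inequalities (6)–(7), the complex interpolation in (6) is immediate from the blockwise identity $2^{q(\theta s_1+(1-\theta)s_2)}\|\Delta_q u\|_{L^p}=(2^{qs_1}\|\Delta_q u\|_{L^p})^\theta(2^{qs_2}\|\Delta_q u\|_{L^p})^{1-\theta}$ followed by H\"older's inequality in $\ell^r$. For the real interpolation estimate (7), I would split the $B^s_{p,1}$-norm sum at a level $q=N$ chosen to balance the two tails; bounding $q<N$ by the $B^{s_2}_{p,\infty}$ norm and $q\geq N$ by the $B^{s_1}_{p,\infty}$ norm produces two geometric series with ratios governed by $s-s_2>0$ and $s_1-s>0$, whose sum is of order $1/(s_1-s_2)$; optimizing in $N$ yields the stated $C(\theta)/(s_1-s_2)$ factor. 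Finally, the Fatou property (8) follows since each $\Delta_q$ is continuous on $\ms'$, so $\Delta_q u_n\to \Delta_q u$ in $\ms'$, and the classical Fatou lemma applied first in $L^p$ and then in $\ell^r$ yields the liminf bound. The main obstacle is item (5), which underlies the Besov-space nonlinear analysis in the rest of the paper; the remainder $R(f,g)$ must be handled carefully because its contribution to $\Delta_q(fg)$ is a sum over frequencies above $2^q$ rather than comparable to $2^q$, and the hypotheses $s>0$ in (i) and $s_1+s_2>0$ in (ii) are precisely what is needed to sum the resulting series. Once (5) is in hand, the remaining items reduce to Bernstein's and H\"older's inequalities together with standard interpolation arguments, as developed in \cite{chem} and \cite{danchin}.
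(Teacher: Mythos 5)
The paper offers no proof of this lemma at all: it is a compendium of standard Besov-space facts quoted from the cited references (Chemin, Danchin, Vishik, Bahouri--Chemin--Danchin). Your outline --- Bony's paraproduct decomposition for the product and algebra estimates, Bernstein plus $\ell^{r_1}\hookrightarrow\ell^{r_2}$ for the embeddings, blockwise H\"older for complex interpolation, splitting the dyadic sum and optimizing for real interpolation, and Fatou applied first in $L^p$ then in $\ell^r$ --- is precisely the standard argument found in those references, and it is correct in all essentials; the only minor imprecision is that the failure of density for $p<\infty$, $r=\infty$ requires a lacunary-type example rather than a nonzero constant, which is not in $L^p(\rr)$ in that regime.
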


Let $f$ be a solution to the following linear transport equation
\begin{align}\label{l-t}
\begin{cases}
\p_t f + v \p_x f = F,
\\
f(x,0) = f_0(x).
\end{cases}
\end{align}
The following proposition can be found in \cite{danchin}, Proposition A.1.  
\begin{proposition}  \label{lt-est} 
Let  $s>\frac32$, $r \in (1,\infty)$ or $s \ge 3/2$, $r = 1$ and let $v$ be a function such that $\p_x v \in L^1 ( (0,T) ; B^{s-1} _{2,r} )$. Also, let $f_0 \in B^s_{2,r}$, $F \in L^1((0,T);B^s_{2,r})$ and suppose $f \in L^\infty((0,T); B^s_{2,r} )\cap C([0,T];S')$ is a solution to \eqref{l-t}.   
  Then for some $C = C(s)$ and  $ V(t) = \int_0^t \| \p_x v(\tau) \|_{B^{s-1}_{2,r}  } d\tau$ we have
\begin{align}
\| f(t) \|_{B^s_{2,r} } \le e^{CV(t)} \left( \| f_0\| _{B^s_{2,r} }  + C \int_0^t e^{-CV(\tau)}   \|F(\tau) \|_{B^s_{2,r} }  d\tau
\right).
\end{align}
Moreover, the above inequality holds when $r= \infty$ and for all $s'<s$.  
\end{proposition}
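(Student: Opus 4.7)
The plan is to localize the transport equation in frequency by applying the dyadic block $\Delta_q$, derive an $L^2$ energy estimate for each $\Delta_q f$, and then reassemble using the $B^s_{2,r}$ norm and close with Gronwall's inequality.

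First, I would apply $\Delta_q$ to both sides of \eqref{l-t} to obtain
\[
\partial_t \Delta_q f + v\,\partial_x \Delta_q f = \Delta_q F + R_q, \qquad R_q := [v,\Delta_q]\partial_x f,
\]
where the commutator $R_q = v\,\Delta_q\partial_x f - \Delta_q(v\,\partial_x f)$ absorbs the failure of $\Delta_q$ to commute with multiplication by $v$. Pairing this equation with $\Delta_q f$ in $L^2$, integrating by parts in the transport term (which produces a factor of $\partial_x v$), and dividing by $\|\Delta_q f\|_{L^2}$ yields the differential inequality
\[
\frac{d}{dt}\|\Delta_q f\|_{L^2} \le \tfrac{1}{2}\|\partial_x v\|_{L^\infty}\|\Delta_q f\|_{L^2} + \|\Delta_q F\|_{L^2} + \|R_q\|_{L^2}.
\]

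The key technical step is the Besov commutator estimate
\[
\Bigl(\sum_q (2^{sq}\|R_q\|_{L^2})^r\Bigr)^{1/r} \le C\,\|\partial_x v\|_{B^{s-1}_{2,r}}\,\|f\|_{B^s_{2,r}},
\]
which is the analogue of the classical Kato--Ponce/Vishik lemma. I would prove it via Bony's paraproduct decomposition $v\,\partial_x f = T_v\partial_x f + T_{\partial_x f}v + R(v,\partial_x f)$: inserting $\Delta_q$ against each piece, the paraproduct commutators gain a factor of $2^{-q}$ from a first-order Taylor expansion of the smooth convolution kernel of $\Delta_q$, while the remainder is handled by spectral support considerations that restrict summation to $q'\ge q-N_0$ and then by Bernstein's inequality. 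Coupled with the embedding $B^{s-1}_{2,r} \hookrightarrow L^\infty$ (available exactly under the hypotheses $s>3/2$, $r\in(1,\infty)$ or $s\ge 3/2$, $r=1$) to bound $\|\partial_x v\|_{L^\infty}$, multiplying by $2^{sq}$ and taking the $\ell^r(q)$ norm yields
\[
\frac{d}{dt}\|f(t)\|_{B^s_{2,r}} \le C\,\|\partial_x v(t)\|_{B^{s-1}_{2,r}}\,\|f(t)\|_{B^s_{2,r}} + C\,\|F(t)\|_{B^s_{2,r}},
\]
and the stated bound follows upon integrating this by the usual integrating-factor form of Gronwall's lemma.

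The main obstacle is the commutator estimate: tracking the cancellations in the paraproduct pieces requires the $s-1$ regularity of $\partial_x v$ to exactly match the $2^{-q}$ frequency gain produced by the Taylor expansion, which is precisely why the threshold $s=3/2$ is critical and why the endpoint index $r=1$ is forced there. A further subtlety is that the $\ell^r$ version of the commutator bound genuinely fails at $r=\infty$; following Danchin, the workaround is to carry out the whole argument at a slightly lower regularity $s'<s$ where $r$ may be taken finite, and then pass to the limit using the Fatou-type property of $B^s_{2,\infty}$ recorded in item (7) of Lemma 2.3.
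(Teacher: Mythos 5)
The paper does not prove this proposition at all: it is imported verbatim as Proposition A.1 of Danchin's Camassa--Holm paper \cite{danchin}, so there is no in-house argument to compare yours against. What you have written is, in outline, precisely the standard proof of that cited result: localize the transport equation with $\Delta_q$, run the $L^2$ energy estimate on each block (the factor $\tfrac12\|\p_x v\|_{L^\infty}$ from integrating by parts is right, as is the identification of $R_q=[v,\Delta_q]\p_x f$), control the commutator in $\ell^r\bigl(2^{sq}L^2\bigr)$ by $\|\p_x v\|_{B^{s-1}_{2,r}}\|f\|_{B^s_{2,r}}$, use the embedding $B^{s-1}_{2,r}\hookrightarrow L^\infty$ (exactly what $s>3/2$, or $s=3/2$ with $r=1$, provides), and close with Gronwall.

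Two caveats. First, essentially all of the technical content of the proposition lives in the commutator estimate, which you assert and motivate but do not prove; the paraproduct bookkeeping --- the Taylor-expansion gain of $2^{-q}$ on the terms $[S_{q'-1}v,\Delta_q]\p_x\Delta_{q'}f$, the remainder term where $s>0$ is needed, and the low-frequency block $\Delta_{-1}$ --- is where the hypotheses on $s$ and $r$ actually get used, so as written this is a correct roadmap rather than a complete proof. Second, your diagnosis of the $r=\infty$ clause is doubtful: the blockwise energy estimate and the commutator bound survive taking $\sup_q$ in place of the $\ell^r$ sum when $s>3/2$, so the a priori inequality does not fail at $r=\infty$ for the reason you give; the restriction to $s'<s$ in Danchin's statement is tied to strong time-continuity of $f$ at the top regularity rather than to the estimate itself. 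Your workaround (run the argument at regularity $s'<s$ using $B^{s}_{2,\infty}\hookrightarrow B^{s'}_{2,1}$ and recover the endpoint by the Fatou property) is nonetheless legitimate, so this inaccuracy does not invalidate the conclusion.
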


For the sake of brevity, from now on we will denote our Besov space cut-off functions $\vp \doteq \vp_q$.  All other cut-off functions $\vp$ are assumed to be of Schwartz class.

%
%
\section{Local Well-posedness on the Torus}
 
Well posedness is broken into four steps. First we show existence of a solution on lifepspan $T$ and energy estimate, then we show that the solution is unique, and finally we show that the solution depends continuously upon the initial  data. 

\subsection{Existence and lifespan}
\begin{theorem}\label{existence}
Suppose  $s>\frac32$, $r \in (1,\infty)$ or $s \ge 3/2$, $r = 1$   and $u_0 \in B^s _{2,r}$. Then there exists a time 
$$
T   \approx \frac{ 1 }{  \|u_0 \|_{B^s _{2,r} }   },
$$
 such that the Cauchy problem for the FW equation has a solution $u \in C([0, T]; B^s _{2,r} ) \cap C^1 ([0, T]; B^{s-1} _{2,r})$ and the solution has the bound 
$$
 \sup_{ t \in [0,T] }  \| u(t) \|_{B^s _{2,r} }   \le 2 \| u_0 \|_{B^s _{2,r} }  .
$$
\end{theorem}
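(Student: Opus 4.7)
The plan is to mimic the classical Friedrichs/Picard iteration used by Danchin for the Camassa--Holm equation, relying on the linear transport estimate of Proposition \ref{lt-est} together with the fact that $(1-\p_x^2)^{-1}\p_x$ is a Fourier multiplier of order $-1$, hence bounded on every $B^s_{2,r}$ (with a gain of regularity that we will actually waste). Start by setting $u^{(0)} \equiv 0$ and, inductively, defining $u^{(n+1)}$ as the solution of the linear Cauchy problem
\begin{equation*}
\p_t u^{(n+1)} + \tfrac32 u^{(n)} \p_x u^{(n+1)} = (1-\p_x^2)^{-1}\p_x u^{(n)},\qquad u^{(n+1)}(0)=u_0,
\end{equation*}
whose solvability at each step follows from standard linear transport theory since $\p_x u^{(n)} \in L^1((0,T); B^{s-1}_{2,r})$ whenever $u^{(n)} \in L^\infty((0,T); B^s_{2,r})$.

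First I would establish a uniform bound. Applying Proposition \ref{lt-est} with $v = \tfrac32 u^{(n)}$ and $F = (1-\p_x^2)^{-1}\p_x u^{(n)}$, and using that the latter operator is bounded on $B^s_{2,r}$, gives
\begin{equation*}
\| u^{(n+1)}(t) \|_{B^s_{2,r}} \le e^{C\int_0^t \| u^{(n)}(\tau)\|_{B^s_{2,r}} d\tau}\left( \| u_0\|_{B^s_{2,r}} + C\int_0^t e^{-C\int_0^\tau \| u^{(n)}\|_{B^s_{2,r}}} \| u^{(n)}(\tau)\|_{B^s_{2,r}} d\tau \right).
\end{equation*}
A standard induction then shows that if $T$ is chosen so that $CT\|u_0\|_{B^s_{2,r}}$ is sufficiently small, the bound $\|u^{(n)}(t)\|_{B^s_{2,r}} \le 2\|u_0\|_{B^s_{2,r}}$ is propagated for all $n$ and all $t \in [0,T]$. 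This fixes $T \approx 1/\|u_0\|_{B^s_{2,r}}$ and yields the desired energy estimate. This step is where the main technical care is needed, because the algebra-type Moser estimate and the Gronwall-style closure must be tuned so that the exponential factor does not eat the factor of $2$.

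Next I would show that $(u^{(n)})$ is Cauchy in the weaker space $C([0,T]; B^{s-1}_{2,r})$. Writing $w^{(n+1)} = u^{(n+1)} - u^{(n)}$, one obtains a transport equation
\begin{equation*}
\p_t w^{(n+1)} + \tfrac32 u^{(n)} \p_x w^{(n+1)} = -\tfrac32 w^{(n)} \p_x u^{(n)} + (1-\p_x^2)^{-1}\p_x w^{(n)},
\end{equation*}
with $w^{(n+1)}(0)=0$. Applying Proposition \ref{lt-est} at regularity $s-1$, controlling $\|w^{(n)} \p_x u^{(n)}\|_{B^{s-1}_{2,r}}$ by the Moser estimate 5(ii) (since $s-1 \le 1/p < s$ in our range, with $p=2$), and absorbing into Gronwall, yields a geometric contraction on $[0,T]$ provided $T$ is shrunk once more by a universal constant. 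This produces a limit $u \in C([0,T]; B^{s-1}_{2,r})$.

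Finally, I would recover the endpoint regularity. The uniform bound combined with the Fatou-type lemma (part 7 of the lemma on Besov spaces) gives $u \in L^\infty([0,T]; B^s_{2,r})$ with $\|u\|_{B^s_{2,r}} \le 2\|u_0\|_{B^s_{2,r}}$; passing to the limit in the equation identifies $u$ as a solution, and $\p_t u \in C([0,T]; B^{s-1}_{2,r})$ follows directly from the equation because $u\p_x u \in B^{s-1}_{2,r}$ and $(1-\p_x^2)^{-1}\p_x u \in B^s_{2,r}$. Strong continuity in $B^s_{2,r}$ is then obtained by applying Proposition \ref{lt-est} to $u$ itself viewed as the solution of the linear transport problem with coefficient $\tfrac32 u$, since the right-hand side of that estimate is continuous in $t$ and therefore the time-continuity of the $B^s_{2,r}$ norm forces strong continuity (this is the standard Bona--Smith type closure). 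The main obstacle throughout is Step 1: propagating the constant $2$ on the energy while keeping $T$ of order $1/\|u_0\|_{B^s_{2,r}}$, which constrains how the Gronwall constants are chosen and is the reason one must be careful with the exponential factor in Proposition \ref{lt-est}.
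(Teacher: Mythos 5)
Your proposal is correct in outline and shares the paper's skeleton (the iteration \eqref{transport_eq}, the uniform bound on a lifespan $T\approx 1/\|u_0\|_{B^s_{2,r}}$ via Proposition \ref{lt-est}, and recovery of the top regularity by the Fatou lemma), but it diverges from the paper at two points. First, to produce the limit the paper does \emph{not} show the iterates are Cauchy: it extracts a subsequence by the Fatou property and then runs an Ascoli--Arzel\`a argument, proving equicontinuity of $\{u_n\}$ in $C([0,T];B^{s-1}_{2,r})$ from the bound on $\p_t u_n$ in \eqref{3.17}. Your route --- writing the difference equation for $w^{(n+1)}=u^{(n+1)}-u^{(n)}$ and contracting in $C([0,T];B^{s-1}_{2,r})$ --- is the classical Danchin scheme; it is, if anything, cleaner, since it gives convergence of the whole sequence rather than a subsequence and feeds directly into the uniqueness estimate (Proposition \ref{lip-dep}). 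Second, for continuity of $t\mapsto u(t)$ in the top norm $B^s_{2,r}$, the paper gives a fully explicit argument: it expands the norm over dyadic blocks, exploits the Hilbert structure of each $L^2$ block to reduce matters to convergence of inner products $\langle \Delta_q u(\tau),\Delta_q u(t)\rangle$, and concludes by dominated convergence in $q$. You instead invoke the standard weak-continuity-plus-norm-continuity closure.

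Two soft spots in your write-up are worth naming. (i) Your final continuity step is compressed to the point of being incomplete: Proposition \ref{lt-est} only gives an \emph{upper} bound, so you must argue $\limsup_{\tau\to t}\|u(\tau)\|_{B^s_{2,r}}\le\|u(t)\|_{B^s_{2,r}}$ by solving the transport problem forward and backward from time $t$, get the matching $\liminf$ from weak lower semicontinuity, and then convert norm convergence plus weak convergence into strong convergence, which uses a Radon--Riesz-type property of $\ell^r(2^{sq}L^2)$; none of this is automatic from ``the right-hand side is continuous in $t$.'' The paper's block-by-block dominated convergence argument is precisely the device that avoids having to invoke such a property. (ii) Your appeal to Moser estimate 5(ii) for $\|w^{(n)}\p_x u^{(n)}\|_{B^{s-1}_{2,r}}$ is misplaced in the main range: for $s>3/2$ one has $s-1>1/2=1/p$, so the hypothesis $s_1\le 1/p$ of 5(ii) fails; the correct tool is simply the algebra property of $B^{s-1}_{2,r}$ (valid since $s-1>1/2$, or $s-1=1/2$ with $r=1$), which is what the paper uses. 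The estimate you need is true, so this is a misattribution rather than an error, but it should be fixed.
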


To show existence of a solution, we will find a sequence of approximate solutions from which we may extract a sub-sequence which converges to a solution of FW.

\subsubsection{Proof of Theorem \ref{existence}}
 We have from the Besov space embedding theorem  that $B^s _{2,r} \hookrightarrow C^1 $ (in particular, in this case for all $\s \ge s-1$, $B^{\s} _{2,r}$ is an algebra). 
 We will consider the following sequence of smooth functions $\{ u_{n} (x,t)\} _{n\ge 0}$, (where the subscript denotes an indexation) with the initial function $u_0(x,t) = 0$ solving the Cauchy problem for the linear transport equation
\begin{align}\label{transport_eq}
\begin{cases}
\p_t u_{n+1}  +  \frac32 u_n  \p_x u_{n+1} = \Lambda^{-1} [\p_x u_n ]\\
u _{n+1} (x,0) =  J_{n+1} u_0(x),
\end{cases}
\end{align}
where $J_{n+1}$ is a  mollifier.

Our first task is to show that the solutions to the above linear transport equation are uniformly bounded on a common lifespan. 
Applying the first estimate from Proposition \ref{lt-est}, we have 
\begin{align}
\| u^{n+1 } (t) \|_{B^s _{2,r} } \le e^{CU_{n} (t)} \left(    \|u_0  \| _{B^s _{2,r} }  +C  \int_0^t e^{-CU_n(\tau)}   \|  \Lambda^{-1} [\p_x u_n ](\tau)\|_{B^s _{2,r} }  d\tau
\right),
\end{align} 
 where $U_n (t) = \frac32 \int_0^t \| \p_x u_n(\tau) \|_{B^{s-1} _{2,r}  } d\tau$. 
We estimate have 
\begin{align}
 \|  \Lambda^{-1} [\p_x u_n ]\|_{B^s _{2,r} }  
\le  \|    u _n   \|_{B^{s} _{2,r} }  ,
\end{align} 
from which we conclude
\begin{align}\label{un-energy}
\| u_{n+1 } (t) \|_{B^s _{2,r} } \le e^{CU_{n} (t)} \left(    \|u_0  \| _{B^s _{2,r} }  +C  \int_0^t e^{-CU_n(\tau)}  \|    u_n(\tau) \|_{B^{s} _{2,r}}  d\tau
\right),
\end{align}

\begin{lemma}[Lifespan and Energy]
There exists a minimum  lifespan 
$$
T = \frac{1}{ 4  C   \|u_0 \|_{B^s _{2,r} }   } ,
$$
 such that for all $n \in \mathbb{Z}^+$ and for all $t\in [0,T]$
$$
\| u_n (t) \|_{B^s _{2,r} }  \le \frac{    \| u_0 \|_{B^s _{2,r} }  }{  1 -  2C     \|u_0 \|_{B^s _{2,r} }    t   }  \le  2 \| u_0 \|_{B^s _{2,r} }    . 
$$
 \end{lemma}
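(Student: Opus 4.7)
My plan is to proceed by induction on $n$. Since the initial iterate $u_0(\cdot,t) \equiv 0$ (distinct from the initial datum $u_0(x)$ of the Cauchy problem, despite the notational coincidence), the base case holds trivially; moreover $u_1(x,t) = J_1 u_0(x)$ so that $\|u_1(t)\|_{B^s_{2,r}} \le \|u_0\|_{B^s_{2,r}} \le M(t)$. For the inductive step, I would assume that $\|u_n(\tau)\|_{B^s_{2,r}} \le M(\tau) := \|u_0\|_{B^s_{2,r}}/(1 - 2C\|u_0\|_{B^s_{2,r}}\tau)$ for all $\tau \in [0,T]$, and then propagate this bound to $u_{n+1}$ by means of the energy estimate \eqref{un-energy}.

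To carry this out, I would first use the embedding $\|\partial_x f\|_{B^{s-1}_{2,r}} \le \|f\|_{B^s_{2,r}}$ together with the inductive hypothesis to bound the exponential prefactor:
\begin{align*}
U_n(t) \le \frac{3}{2}\int_0^t M(\tau)\,d\tau = -\frac{3}{4C}\ln(1 - 2C\|u_0\|_{B^s_{2,r}} t),
\end{align*}
which yields $e^{CU_n(t)} \le (1 - 2C\|u_0\|_{B^s_{2,r}} t)^{-3/4}$. I would then plug this into \eqref{un-energy}, use $e^{-CU_n(\tau)} \le 1$ together with $\|u_n(\tau)\|_{B^s_{2,r}} \le M(\tau)$ in the remaining integral, and compute explicitly to obtain a closed-form upper bound on $\|u_{n+1}(t)\|_{B^s_{2,r}}$; a direct verification then shows this upper bound does not exceed $M(t)$ on $[0,T]$. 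The trailing inequality $M(t) \le 2\|u_0\|_{B^s_{2,r}}$ on $[0,T]$ is then immediate: at $t = T = 1/(4C\|u_0\|_{B^s_{2,r}})$ we have $1 - 2C\|u_0\|_{B^s_{2,r}} T = 1/2$, whence $M(T) = 2\|u_0\|_{B^s_{2,r}}$.

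The hard part will be the precise coordination of the generic constant $C$ across its three appearances: the Besov transport estimate of Proposition \ref{lt-est}, the coefficient $2C$ in the denominator of $M$, and the coefficient $4C$ in the lifespan $T$. The calibration must guarantee that, after absorbing overhead factors—the $3/2$ from the transport coefficient of $u_n\partial_x u_{n+1}$, and the constant in the bound $\|\Lambda^{-1}\partial_x u_n\|_{B^s_{2,r}} \le \|u_n\|_{B^s_{2,r}}$—the resulting estimate from \eqref{un-energy} still stays below $M(t)$. This forces $C$ to be taken sufficiently large relative to the constant appearing in Proposition \ref{lt-est}, but is otherwise a routine bookkeeping step rather than a conceptual obstacle.
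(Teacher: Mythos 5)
Your proposal follows essentially the same route as the paper: induction on $n$, bounding $U_n(t)$ by integrating the inductive bound $M(\tau)$ to obtain a logarithm, exponentiating to a power of $(1-2C\|u_0\|_{B^s_{2,r}}t)$, and closing the estimate via \eqref{un-energy}. The only cosmetic differences are that you retain the factor $\tfrac32$ (giving exponent $-3/4$ rather than the paper's $-1/2$) and bound $e^{-CU_n(\tau)}\le 1$ where the paper uses the sharper ratio estimate \eqref{3.7}; the deferred ``direct verification'' together with the calibration of $C$ is precisely the step the paper likewise handles by absorbing constants.
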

\begin{proof}
We will prove this claim by induction. 
The base case is trivially true, therefore, plugging the inductive hypothesis into \eqref{un-energy} we obtain 
\begin{align}\label{un-energy-2}
\| u_{n+1 } (t) \|_{B^s _{2,r} } \le e^{CU_{n} (t)}   \|u_0 \| _{B^s _{2,r} }  +C  \int_0^t e^{CU_{n} (t)} e^{-CU_n(\tau)}     \frac{  \| u_0 \|_{B^s _{2,r} }  }{ 1 -  2C      \|u_0 \|_{B^s _{2,r} } ^k  \tau }   d\tau
 .
\end{align} 
By the definition of $U_n$, and the inductive hypothesis 
$$
U_n(t) \le   \int_0^t  \frac{  \| u_0 \|_{B^s _{2,r} }  }{ 1 -2 C      \|u_0 \|_{B^s _{2,r} }    \tau   } d\tau = \frac{-1}{ 2C} \ln(1   -2C     \|u_0 \|_{B^s _{2,r} }   t )
$$ 
Using the above estimate  we obtain the  estimate 
\begin{align} \label{3.6}
&e^{CU_{n} (t)}  \le \frac{1}{   { ( 1- 2C   \|u_0 \|_{B^s _{2,r} }  t )^{1/2} } }   .
\end{align}
Similarly, we estimate the difference
$$
U_n(t) -U_n(\tau)\le   \frac{-1}{ 2C} \ln(1   -2 C     \|u_0 \|_{B^s _{2,r} }   t ) + \frac{1}{2C} \ln(1   -2 C     \|u_0 \|_{B^s _{2,r} }   \tau ),
$$ 
which
yields the estimate 
\begin{align} \label{3.7}
e^{C U_n(t)} e^{ -CU_n(\tau) }  \le 
\left(  \frac{1-2 C  \|u_0 \|_{B^s _{2,r} }  \tau}{  { 1-2C  \|u_0 \|_{B^s _{2,r} }  t  } }  \right)^{1/2}  .
\end{align}

Using inequality \eqref{3.6} on the first term, and estimate \eqref{3.7} on the second term, we obtain \eqref{un-energy-2} is bounded by 
\begin{align}\label{un-energy-5} 
 \| u_{n+1 } (t) \|_{B^s _{2,r} } 
& \le 
\frac{ \|u_0 \|_{B^s _{2,r} }  }{  { 1- 2C     \|u_0 \|_{B^s _{2,r} } ^kt  } }   ,
\end{align} 
which completes the inductive step.
 \end{proof}
Next  we will extract a subsequence of $\{ u_n\}$ which converges to a solution of the FW equation. 

\noindent
{\bf Step 1.} 
Our first step will be to show that there exists a sub-sequence $\{u_{n_j}\}$ of $\{u_n\}$ which converges in $L^\infty( [0,T]; B^s _{2,r} )$. 
Since the family $\{ u_n\}$ is bounded (by the previous energy estimate) in $C([0,T]; B^s _{2,r} )$, we may apply the Fatou property in Besov spaces (See Danchin Theorem 2.72), and conclude that there is a sub-sequence, $\{ u_{n_j}\}$, which converges to a limit $u$, and 
$$
\| u\| _{B^s_{2,r} } \le C \liminf_{n_j\rightarrow \infty} \|u_{n_j}\|_{B^s_{2,r} } .
$$
 
\noindent
{\bf Step 2.} 
Next we will employ Ascoli's theorem, to conclude that our element $u$ is in $ C( [0,T]; B^{s-1} _{2,r})$. 
Since the uniform limit of continuous functions is continuous, if we show that the sequence $u_n$ is equitontinuous in $ C( [0,T]; B^{s-1} _{2,r})$, we show that there exists a limit point $ u\in C( [0,T]; B^{s-1} _{2,r})$.
Let $t_1 $ and $t_2$ be in $[0,T]$. By the mean value theorem, 
$$\|
u_n(t_1)  - u_n(t_2) \|_{B^{s-1} _{2,r} }\le  |t_1 - t_2 | \sup_{t \in [0, T]} \| \p_t u _n \|_{B^{s-1} _{2,r} } .
$$
We have 
\begin{align}\label{3.17}
 \| \p_t u _n \|_{B^{s-1} _{2,r} }  \le \frac32 \| u_{n-1}   \p_x u_n \|_{B^{s-1} _{2,r} }  + \| \Lambda^{-1} [\p_x u]  \|_{B^{s-1} _{2,r} } 
\end{align}
Using ${B^{s-1} _{2,r} } $ is an algebra, we find
$$
 \| \p_t u _n \|_{B^{s-1} _{2,r} }  \le \frac32 \| u_{n-1}  \|_{B^{s-1} _{2,r} }   \| \p_x u_n \|_{B^{s-1} _{2,r} }  +  \|  u_{n-1} \|_{B^{s-1} _{2,r} }
$$
Hence, 
$$\|
u_n(t_1)  - u_n(t_2) \|_{B^{s-1} _{2,r} }\le  |t_1 - t_2 | 2 \left(\| u_{0}  \|_{B^{s} _{2,r}}  +\| u_{0}  \|_{B^{s} _{2,r}}  ^2 \right).
$$
Thus, $u_n$ is equicontinuous and we conclude that there exists a limit point of $u_{n_j} $, $u$, such that $u \in C([0,T]; B^{s-1} _{2,r})$.

\noindent
{\bf Step 3.} We now show that the solution $ u \in C([0,T] ; B^{s}_{2,r})$. 
Since $u \in L^\infty([0,T] ; B^{s}_{2,r})$, we must show that that for every fixed $ t \in (0, T)$
$$
\lim_{\tau \rightarrow t} \| u(\tau) - u(t) \|_{B^s_{2,r} } = 0. 
$$
We will show this limit by applying the Dominated convergence theorem. 
By definition of the Besov space norm, and using the solution is mean-zero we have 
\begin{align}
\| u(\tau) - u(t) \|_{B^s_{2 ,r} } ^ r  
&= \sum_{q = 0}^\infty 2^{sqr} \left(  \sum_{n \neq 0} \left| \varphi_q (n) \left(  \wh u(n,\tau) -\wh u(n,t) \right)  \right|^2 \right) ^{r/2} 
\\
& = 
\sum_{q = 0}^\infty 2^{sqr} \left(  \sum_{n \neq 0}   \varphi_q^2 (n) \left(  \wh u^2(n,\tau) -  2 \wh u  (n,\tau)    \wh u  (n,t)   +\wh u^2 (n,t) \right)   \right) ^{r/2} 
\\
& = 
\sum_{q = 0}^\infty 2^{sqr} \left( 
\|  \varphi_q \wh u(\tau)\|_{\ell^2} ^2+ \|  \varphi_q \wh u(t)\|_{\ell^2}^2 -2 \langle\varphi_q \wh u(\tau), \varphi_q \wh u(t)\rangle_{\ell^2}
\right)^{r/2} 
.
\end{align}
First notice that the function being summed is dominated
$$
2^{sqr} \left( 
\|  \varphi_q \wh u(\tau)\|_{\ell^2} ^2+ \|  \varphi_q \wh u(t)\|_{\ell^2}^2 -2 \langle\varphi_q \wh u(\tau), \varphi_q \wh u(t)\rangle_{\ell^2}
\right)^{r/2}  
\le 
2^{sqr}  \left( 4 \sup_{t' \in [0, T]} \|  \varphi_q \wh u(t')\|_{\ell^2}^2\right)^{r/2}  ,
$$
which is $q$-summable since $u\in L^\infty([0,T]; B^s_{2,r}) $. 
Thus, if for every $q$, we show the two limits (where we are considering functions of $q$)
\begin{align}
 &\langle\varphi_q \wh u(\tau), \varphi_q \wh u(t)\rangle_{\ell^2}  \xrightarrow[]{\tau\rightarrow t} \| \varphi_q \wh u(t)\|_{\ell^2}^2
\label{limit_2}
\\
&\|  \varphi_q \wh u(\tau)\|_{\ell^2}  \xrightarrow[]{\tau\rightarrow t} \|  \varphi_q \wh u(t)\|_{\ell^2}  
\label{limit_1} ,
\end{align}
then we may conclude that for each $q$
$$
\lim_{\tau \rightarrow t} 2^{sqr} \left( 
\|  \varphi_q \wh u(\tau)\|_{\ell^2} ^2+ \|  \varphi_q \wh u(t)\|_{\ell^2}^2 -2 \langle\varphi_q \wh u(\tau), \varphi_q \wh u(t)\rangle_{\ell^2}
\right)^{r/2} 
= 0,
$$
and we may conclude that by the Dominated convergence theorem for series
$$
\lim_{\tau \rightarrow t} \| u(\tau) - u(t) \|_{B^s _{2,r} } = 0. 
$$
{\bf Proof of \eqref{limit_2}.}
We have 
\begin{align}
D_1\dot =&\langle\varphi_q \wh u(\tau), \varphi_q \wh u(t)\rangle_{\ell^2} - \| \varphi_q \wh u(t)\|_{\ell^2}^2
= 
\langle\varphi_q ( \wh u(\tau) - \wh u(t)) , \varphi_q \wh u(t)\rangle_{\ell^2}
,
\end{align}
and therefore, the Cauchy-Schwarz inequality yields 
\begin{align}
D_1 & 
\le
\| \varphi_q ( \wh u(\tau) - \wh u(t)) \|_{\ell^2}  \| \varphi_q \wh u(t)\|_{\ell^2}
\\
& = 
\| \Delta_q  (  u(\tau) -  u(t)) \|_{L^2}  \| \Delta_q  u(t)\|_{L^2}.
\end{align}
Summing over $q$, and using the definition of Besov spaces, the above is bounded by 
\begin{align}
D_1 & 
\le 
\sum_{q = 0}^\infty \| \Delta_q  (  u(\tau) -  u(t)) \|_{L^2} \sum_{q = 0}^\infty  \| \Delta_q  u(t)\|_{L^2}
\\
& = 
\| u(\tau) -  u(t) \|_{B^{0}_{2,1} }  \|  u(t)\|_{B^{0}_{2,1} }.
\end{align}
Since $s-1 >0$, we may apply the Besov space embedding theorem, to conclude 
\begin{align}
D_1 & 
\le
\| u(\tau) -  u(t) \|_{B^{s-1}_{2,r} }  \|  u(t)\|_{B^{s}_{2,r} } \le 2  \|  u_0\|_{B^{s}_{2,r} } \| u(\tau) -  u(t) \|_{B^{s-1}_{2,r} } .
\end{align}
From Step 2, we know that $\lim_{\tau \rightarrow t} \| u(\tau) -  u(t) \|_{B^{s-1}_{2,r} }   = 0$, and we complete the proof of \eqref{limit_2}. 

\noindent
{\bf Proof of \eqref{limit_1}.}
We see that 
\begin{align}
D_2 & \dot = 
\|  \varphi_q \wh u(\tau)\|_{\ell^2} ^2 - \|  \varphi_q \wh u(t)\|_{\ell^2}  ^2
\\
& = 
\sum_{n\neq 0}  \varphi_q^2  (n) \wh u^2(n,\tau) - \sum_{n\neq 0}  \varphi_q^2  (n) \wh u^2(n,t) 
\\
& = 
\sum_{n\neq 0}  \varphi_q  (n) \left( \wh u(n,\tau) - \wh u(n,t) \right)\varphi_q  (n) \left( \wh u(n,\tau) + \wh u(n,t)\right ).
\end{align}
Applying the Cauchy Schwarz inequality yields 
\begin{align}
D_2 & \le
 \| \varphi_q  \left( \wh u(\tau) - \wh u(t) \right) \|_{\ell^2} \| \varphi_q  \left( \wh u(\tau) + \wh u(t)\right )\|_{\ell^2} 
\\
& =
 \| \Delta_q  (  u(\tau) -  u(t)) \|_{L^2}  \| \Delta_q  (  u(\tau) +  u(t)) \|_{L^2} 
.
\end{align}
We sum over $q$ to obtain 
\begin{align}
D_2 & \le 
 \|    u(\tau) -  u(t)  \|_{B^{0}_{2,1} }   \|  u(\tau) +  u(t)  \|_{B^{0}_{2,1} } 
.
\end{align}
Using the embedding theorem for Besov spaces, as well as the solution size estimate for the second term yields
\begin{align}
D_2 & \le 
 4 \|u_0\|_{B^s _{2,r}}\|    u(\tau) -  u(t)  \|_{B^{s-1}_{2,r} }   
.
\end{align}
Taking the limit as $\tau \rightarrow t$, of  the above we conclude 
\begin{align}\lim_{\tau \rightarrow t} D_2  = 0 \Longrightarrow 
\|  \varphi_q \wh u(\tau)\|_{\ell^2}  \xrightarrow[]{\tau\rightarrow t} \|  \varphi_q \wh u(t)\|_{\ell^2}  ,
\end{align}
which completes the proof of continuity; i.e. $ u \in C([0,T] ; B^{s}_{2,r})$. 

%

\noindent
{\bf Step 4.} We now have enough information about $u$ to show that it is a  classical solution of the FW equation:
$$
\p_t u = - \frac32 u \p_x u + \Lambda^{-1} [ \p_x u ] .
$$ 

First, we see that $u_n \rightarrow u $ in $C([0,T]; C^1)$ since $B^{s}_{2,r}  \hookrightarrow C^1$, therefore the distributional derivatives, $\p_t u_n \rightarrow \p_t u$; i.e.
$$
\int_0^T u_n (t)\p_t \phi(t)dt  \rightarrow \int_0^T u(t ) \p_t \phi(t) dt   
$$ 
for $\phi \in\mathcal{S} ([0, T])$ and 
for $x$ almost everywhere. 
Now we will verify that the limit of $\p_t u_n$ is
 \begin{align}\label{3.37}
\p_t u_n \rightarrow  - \frac32 u \p_x u + \Lambda^{-1} [ \p_x u ] \in C([0, T]; C),
\end{align}
 which shows that $u$ is classically differentiable in $t$ and is a classical solution of FW. Indeed,  letting $\| u_n \| _{ C([0,T]; B^{s}_{2,r} )} <\rho$, we have 
 \begin{align}
\|   \Lambda^{-1} [ \p_x u_n ] -  \Lambda^{-1} [ \p_x u ] \|_{ C([0,T]; C)}  \le C_1(\rho) \| u_n - u\| _{ C([0,T]; B^{s}_{p,r} )}   \rightarrow 0
\\
\| u_n  \p_x u_{n+1} - u   \p_x u \|_{ C([0,T]; C)} \le  C_2(\rho) \| u_n - u\| _{ C([0,T]; B^{s}_{2,r} )}   \rightarrow 0,
\end{align}
  where we used the Besov space embedding theorem $(B^{s-1}_{p,r} \hookrightarrow C)$. Thus, taking the limit as $n \rightarrow \infty$ we obtain \eqref{3.37}. 

Since $u$ is differentiable in $t$, and satisfies the FW equation, we can measure in what space $\p_t u$ exists in.  We observe from inequality  \eqref{3.17}, by replacing $\p_t u_n$ with $\p_t u$, that $u \in C^1 ([0,T]; B^{s-1} _{2,r}$), thus we have shown existence of a solution to the Cauchy problem for the FW equation 
 $$
u \in C ([0,T]; B^{s} _{2,r}) \cap C^1 ([0,T]; B^{s-1} _{2,r}).
$$ 

\subsection{Uniqueness} The proof of uniqueness follows from the following energy estimate. 
Next proposition illustrates how a change in initial data reflects on the solution. Uniqueness and continuity of the solution map follow from this result. 
 \begin{proposition}\label{lip-dep}
 For $1< r <\infty$ and $s>3/2$, or $r = 1$ and $s\ge 3/2$; given two solutions $u,v \in  C([0, T]; B^s_{2,r})\cap C^1([0,T]; B^{s-1} _{2,r} )$ of FW with initial data $u_0, v_0 \in B^s_{2,r}$ we have
 \begin{equation}\label{lip-s-1}
\sup_{t\in [0, T]} \|u(t)-v(t) \|_{B^{s-1} _{2,r} } \le \|u_0-v_0\|_{B^{s-1} _{2,r} }
 \end{equation}
 \end{proposition}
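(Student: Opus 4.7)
The plan is to reduce the statement to the linear transport estimate already established in Proposition \ref{lt-est}, applied to the equation satisfied by the difference $w = u - v$. Using the nonlocal form \eqref{fwnl} and subtracting the two equations, one rewrites $u u_x - v v_x = u w_x + w v_x$, so that
\begin{equation*}
\partial_t w + \tfrac32 u \, \partial_x w = F, \qquad F := -\tfrac32 w\, v_x + \Lambda^{-1}\partial_x w, \qquad w(0) = u_0 - v_0.
\end{equation*}
This is a linear transport equation for $w$ with transporting velocity $\tfrac32 u$ and forcing $F$, both of which have the regularity needed to invoke Proposition \ref{lt-est} at level $s-1$: indeed, since $u \in C([0,T]; B^s_{2,r})$, we have $\partial_x(\tfrac32 u) \in L^1((0,T); B^{s-2}_{2,r})$.

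Next I would estimate $F$ in $B^{s-1}_{2,r}$. Two facts are needed. First, the operator $\Lambda^{-1}\partial_x$ has symbol $i\xi/(1+\xi^2)$ and is therefore bounded on every $B^{\sigma}_{2,r}$ with norm $\leq 1$, so $\|\Lambda^{-1}\partial_x w\|_{B^{s-1}_{2,r}} \leq \|w\|_{B^{s-1}_{2,r}}$. Second, I would use the algebra property of $B^{s-1}_{2,r}$ to conclude
\begin{equation*}
\|w\, v_x\|_{B^{s-1}_{2,r}} \leq C\|w\|_{B^{s-1}_{2,r}} \|v_x\|_{B^{s-1}_{2,r}} \leq C\|w\|_{B^{s-1}_{2,r}}\|v\|_{B^s_{2,r}}.
\end{equation*}
Combining, $\|F(t)\|_{B^{s-1}_{2,r}} \leq C(1 + \|v(t)\|_{B^s_{2,r}})\|w(t)\|_{B^{s-1}_{2,r}}$. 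Plugging this forcing bound and the initial datum into the conclusion of Proposition \ref{lt-est} and invoking Gronwall's inequality yields
\begin{equation*}
\|w(t)\|_{B^{s-1}_{2,r}} \leq \|u_0 - v_0\|_{B^{s-1}_{2,r}}\, \exp\!\Bigl(C\!\int_0^t (1 + \|u(\tau)\|_{B^s_{2,r}} + \|v(\tau)\|_{B^s_{2,r}})\,d\tau\Bigr),
\end{equation*}
which, using the solution size bound $\|u\|_{B^s_{2,r}}, \|v\|_{B^s_{2,r}} \leq 2\|u_0\|_{B^s_{2,r}}, 2\|v_0\|_{B^s_{2,r}}$ on $[0,T]$ from Theorem \ref{existence}, can be absorbed by choosing $T$ suitably, giving the claimed bound.

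The only real subtlety is the endpoint case $s = 3/2$ with $r = 1$, where the Moser-type product estimate is applied at the critical Sobolev exponent $s - 1 = 1/2$. Here $B^{1/2}_{2,1}$ is still a Banach algebra because of the embedding $B^{1/2}_{2,1} \hookrightarrow L^\infty$ recorded in part (4) of Lemma 2.2, so the product estimate $\|w v_x\|_{B^{1/2}_{2,1}} \lesssim \|w\|_{B^{1/2}_{2,1}}\|v_x\|_{B^{1/2}_{2,1}}$ remains valid; similarly the transport estimate of Proposition \ref{lt-est} covers precisely this endpoint case. Everything else is a routine application of Gronwall's inequality. Uniqueness is then immediate by setting $u_0 = v_0$, and continuity of the data-to-solution map in the weaker $B^{s-1}_{2,r}$ topology will follow directly from \eqref{lip-s-1}, to be promoted to continuity in $B^s_{2,r}$ in a subsequent step via interpolation.
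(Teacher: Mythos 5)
Your proposal is correct and follows essentially the same route as the paper: rewrite the equation for $w=u-v$ as a linear transport equation (you transport by $\tfrac32 u$ with forcing $-\tfrac32 w\,v_x+\Lambda^{-1}\p_x w$, the paper transports by $\tfrac32 v$ with forcing involving $w\,\p_x u$ --- a symmetric and immaterial choice), apply Proposition \ref{lt-est} at regularity $s-1$ together with the algebra/Moser product estimate and the boundedness of $\Lambda^{-1}\p_x$, and close with Gronwall. Note that, like the paper's own proof, you actually obtain $\|w(t)\|_{B^{s-1}_{2,r}}\le e^{Ct}\|w_0\|_{B^{s-1}_{2,r}}$ rather than the literal constant-$1$ bound in \eqref{lip-s-1}, which is all that is needed for uniqueness and Lipschitz dependence.
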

\begin{proof}
Let $w=u-v$. Then
\begin{equation}
\partial_t w+ \frac32 v  \partial_x w = -\frac32  (u+v) w \partial_x u+ \Lambda^{-1} \left[ \p_x w\right].
\end{equation}
 This is a transport equation, and by Proposition \ref{lt-est}, followed by the algebra property for Besov spaces, we have
\begin{equation}
\|w\|_{B^{s-1}_{2,r}}\leq \|w_0\|_{B^{s-1}_{2,r}} e^{c\int_0^t I(\tau)d\tau}+C  \int_0^t e^{c\int_{\tau}^t I(\tau')d\tau'} \|w(\tau)\|_{B^{s-1}_{2,r}} d\tau
\end{equation}
where $I(t)=\| \partial_x v\|_{B^{s-1}_{2,r}}$, 
from which we obtain
\[ \frac{d}{dt}\left( \|w\|_{B^{s-1}_{2,r}}e^{-c\int_0^t I(\tau)d\tau} \right)\leq C   e^{-c\int_0^{t} I(\tau)d\tau} \|w(t)\|_{B^{s-1}_{2,r}} ,
\]
with $\|w\|_{B^{s-1}_{2,r}}|_{t=0}=\|w_0\|_{B^{s-1}_{2,r}}$. By Gronwall's inequality we obtain
\begin{align}  
\|w\|_{B^{s-1}_{2,r}} 
& \notag 
\leq 
\|w_0\|_{B^{s-1}_{2,r}} e^{ {R}t} . \qedhere
\end{align} 
\end{proof}

\subsection{Continuous dependence}
Let $u_0 \in  B^{s}_{2,r} $ and let $\{u_{0,m}\}_{m=0}^\infty $ be a sequence in $ B^{s}_{2,r} $ such that $u_{0,m} \rightarrow u_0$ in $ B^{s}_{2,r} $. 
If $u$ is the corresponding solution to the FW equation with initial data $u_0$, and $u_m$ are the solutions corresponding to the initial data $u_{0,m}$, then we will show 
$$
\lim_{m \rightarrow \infty} \|  u_m - u\|_{C([0,T];  B^{s}_{2,r} ) } = 0.
$$  
This is equivalent to showing that for any $\delta > 0$, there exists an integer $N>0$, such that if $m>N$
\begin{align} \label{delta1}
\|  u_m - u\|_{C([0,T];  B^{s}_{2,r} ) } < \delta.
\end{align}
To do this, we will add and subtract $u^\ee$ and $u^\ee_m$; solutions corresponding to mollified initial data $J_{1/\ee} u_0$ and $ J_{1/\ee} u_{0,m}$. Applying the triangle, 
we see that to prove inequality \eqref{delta1} it suffices to show 
\begin{align}
&\|  u_m -u_m^\ee   \|_{C([0,T];  B^{s}_{2,r} ) }< \delta/3 \\
&\|  u_m^\ee -u^\ee   \|_{C([0,T];  B^{s}_{2,r} ) } < \delta/3\\
  & \|  u^\ee -u   \|_{C([0,T];  B^{s}_{2,r} ) } < \delta/3.
\end{align}

\subsubsection{Estimating $u^\ee -u$ and $ u_m -u_m^\ee $.}
The estimates for the first and last terms are identical. Therefore, we will show the estimate 
$$
\|  u^\ee -u   \|_{C([0,T];  B^{s}_{2,r} ) } < \delta/3.
$$
We add and subtract $u_n$, the approximate solution  to the transport equation, equation \eqref{transport_eq}.
$$
\|  u^\ee -u   \|_{C([0,T];  B^{s}_{2,r} ) } <\|  u^\ee -u_n   \|_{C([0,T];  B^{s}_{2,r} ) } +\|  u_n -u \|_{C([0,T];  B^{s}_{2,r} ) } 
$$
We know that $u = \lim_{n\rightarrow\infty} u_n$ in $C([0,T];  B^{s}_{2,r} )$, thus by choosing $n$ sufficiently large, 
$$
\|  u_n -u \|_{C([0,T];  B^{s}_{2,r} ) }  < \delta/ 6.
$$

For the first term, define $u_n^\ee$ to be the solution to the transport equation \eqref{transport_eq} with initial data $J_n J_{1/\ee} u_0(x)$. Then we have 
$$
\|  u^\ee -u_n   \|_{C([0,T];  B^{s}_{2,r} ) } \le \|  u^\ee -u_n^\ee   \|_{C([0,T];  B^{s}_{2,r} ) }  + \|  u^\ee _n-u_n   \|_{C([0,T];  B^{s}_{2,r} ) } 
$$
From our proof of existence of solutions to the Cauchy problem for the FW equation, we know that  $u_n^\ee \rightarrow u^\ee$. Thus,  by choosing $n$ sufficiently large, 
$$
\|  u^\ee -u_n   \|_{C([0,T];  B^{s}_{2,r} ) } \le \delta/12+ \|  u^\ee _n-u_n   \|_{C([0,T];  B^{s}_{2,r} ) } 
$$
Let $v_n^\ee = u_n^\ee-u_n$. Then by linearity $v_n^\ee$ is a solution to the transport equation, with initial data $v_n^\ee(x,0) = J_n J_{1/\ee} u_0(x)-J_n   u_0(x)$, we may apply the estimate found in Proposition 1 to obtain
\begin{align}
\| v_n^\ee (t) \|_{B^s_{2,r} } \le    \| J_n J_{1/\ee} u_0(x)-J_n   u_0(x)\| _{B^s_{2,r} } <\delta/12
\end{align}
by choosing $1/\ee$ sufficiently large. 

\subsubsection{Estimating $u_m^\ee -u^\ee$.}We will now prove $\|  u_m^\ee -u^\ee   \|_{C([0,T];  B^{s}_{2,r} ) } < \delta/3$. For notational convenience, we will suppress the $\ee$ superscripts. However, we note that $u$ and $u_m$ are in $C([0,T];B^{s+1}_{2,r})$ since the mollified initial data is $B^{s+1}_{2,r}$. 
We let $v = u_m - u$, and $w = u_m + u$ to obtain 
\begin{align}
\p_t v  + w \p_x v 
& \notag= - v \p_x w + \Lambda^{-1} [ \p_x v] .
\end{align}
Using the energy estimate, \eqref{lip-s-1}, we obtain 
\begin{align}
\|  v  \|_{B^s_{2,r}  } \lesssim \|  v_0  \|_{B^s_{2,r}  }  = \| u_m^\ee(0) -u^\ee (0)  \|_{B^s_{2,r}  } \le  \| u_m(0) -u (0)  \|_{B^s_{2,r}  } 
\end{align}
Taking $m>N$ sufficiently large, we obtain 
\begin{align}\label{v_est}
\|u_m^\ee -u^\ee\|_{B^s_{2,r}  } 
  <\delta/3,
\end{align}
which completes the proof of continuous dependence for the data-to-solution map.

\subsection{H\"{o}lder dependence in a weaker topology}
The proof of uniquenss is found by showing the data-to-solution map is Lipshiz continuous from $B^{s-1}_{2,r}$ to $C([-T, T]; B^{s-1}_{2,r})$. 
By interpolation, we can obtain that the data-to-solution map is H\"older continuous if $s-1<\s<s$ with H\"older exponent $s-\s$.
\begin{corollary}
Let $r>1$ and $s>3/2$ or $r = 1$ and $s\ge 3/2$. If $\s \in \rr$ such that $s-1 <\s<s$, then the data-to-solution map is H\"older continuous from $B^\s_{2,r}$ to $C([0, T]; B^\s_{2,r})$.
\end{corollary}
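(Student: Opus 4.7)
The plan is to derive the Hölder estimate by interpolating between the Lipschitz bound at regularity $s-1$ and the uniform size bound at regularity $s$. Concretely, fix two initial data $u_0,v_0$ in a bounded subset $B$ of $B^s_{2,r}$, let $u,v$ be the corresponding solutions provided by Theorem \ref{existence} on a common lifespan $T$, and write $\sigma=\theta(s-1)+(1-\theta)s$ so that $\theta=s-\sigma\in(0,1)$. The two ingredients already available are the Lipschitz estimate in the weaker norm from Proposition \ref{lip-dep},
\begin{equation*}
\sup_{t\in[0,T]}\|u(t)-v(t)\|_{B^{s-1}_{2,r}}\le e^{RT}\,\|u_0-v_0\|_{B^{s-1}_{2,r}},
\end{equation*}
where $R$ depends only on $\sup_B\|\cdot\|_{B^s_{2,r}}$, and the energy bound from Theorem \ref{existence}, which yields
\begin{equation*}
\sup_{t\in[0,T]}\|u(t)-v(t)\|_{B^s_{2,r}}\le 2\bigl(\|u_0\|_{B^s_{2,r}}+\|v_0\|_{B^s_{2,r}}\bigr)\le M(B).
\end{equation*}

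Next I would apply the complex interpolation inequality from item (6) of Lemma~2.2, pointwise in $t$, giving
\begin{equation*}
\|u(t)-v(t)\|_{B^\sigma_{2,r}}\le \|u(t)-v(t)\|_{B^{s-1}_{2,r}}^{\,s-\sigma}\;\|u(t)-v(t)\|_{B^s_{2,r}}^{\,\sigma-s+1}.
\end{equation*}
Combining with the two displayed estimates and the continuous embedding $B^\sigma_{2,r}\hookrightarrow B^{s-1}_{2,r}$ (from item (3) of Lemma 2.2, valid since $\sigma>s-1$) to replace $\|u_0-v_0\|_{B^{s-1}_{2,r}}$ by a multiple of $\|u_0-v_0\|_{B^\sigma_{2,r}}$, one obtains
\begin{equation*}
\sup_{t\in[0,T]}\|u(t)-v(t)\|_{B^\sigma_{2,r}}\le C(B,T)\;\|u_0-v_0\|_{B^\sigma_{2,r}}^{\,s-\sigma},
\end{equation*}
which is precisely Hölder continuity with exponent $s-\sigma$.

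The main technical point to watch is that every constant appearing along the way — the Lipschitz constant $R$, the lifespan $T$, and the interpolation constant — depends only on the bounded subset $B\subset B^s_{2,r}$ from which the data are drawn, not on the individual data. This is automatic from the lifespan/size estimate \eqref{sizeest} of Theorem \ref{wp} together with the proof of Proposition \ref{lip-dep}, so no new work is needed there. A secondary subtlety is the endpoint $r=\infty$ (or $r=1$ at the critical value $s=3/2$), where the interpolation in Lemma 2.2(6) must be verified to apply as stated; in the $r=\infty$ case one should instead invoke the real interpolation inequality in item (7), which produces the same Hölder exponent $s-\sigma$ at the price of an extra factor that is harmless for $\sigma$ strictly inside $(s-1,s)$. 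Beyond these bookkeeping issues the argument is a direct consequence of the two energy estimates already proved, so there is no substantive obstacle.
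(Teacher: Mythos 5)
Your argument is correct and is exactly the route the paper takes: the paper's proof of this corollary is a one-line remark that it ``follows directly from the energy estimate in $B^{s-1}_{2,r}$ and interpolation,'' and your write-up simply supplies the details (the Lipschitz bound of Proposition \ref{lip-dep}, the size bound from the existence theorem, the interpolation inequality with $\theta=s-\sigma$, and the embedding $B^\sigma_{2,r}\hookrightarrow B^{s-1}_{2,r}$), arriving at the same H\"older exponent $s-\sigma$ claimed in the text. No discrepancy with the paper's approach.
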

\begin{proof}
The proof of this follows directly from the energy estimate in $B^{s-1}_{s.r}$ and interpolation.
\end{proof}

%
%
\section{Local Well-posedness on the Real Line}
Here we can duplicate the proofs for uniqueness and continuity of the flow map as well as the preliminary estimates and lifespan without any difficulty.  The trouble lies in the existence of a solution on the real line.  Furthermore, we must change our Friedrichs mollifiers to be non-periodic and thus we fix $j \in \mathcal{S}(\rr)$ such that $\hat{j}(0)=1$.  Then we may define $j_m(x)=mj(mx)$.  Therefore, we have our Friedrichs mollifier to be $J_m$ such that
\[
J_mf \doteq j_m \star f.
\]
Now we may proceed in the proof of existence.

\noindent
{\bf Step 1.} 
Our first step will be to show that there exists a sub-sequence $\{u_{n_j}\}$ of $\{u_n\}$ which converges in $L^\infty( [0,T]; B^s _{2,r} )$. 
Since the family $\{ u_n\}$ is bounded (by the previous energy estimate) in $C([0,T]; B^s _{2,r} )$, we may apply the Fatou property in Besov spaces (See Danchin Theorem 2.72), and conclude that there is a sub-sequence, $\{ u_{n_j}\}$, which converges to a limit $u$, and 
$$
\| u\| _{B^s_{2,r} } \le C \liminf_{n_j\rightarrow \infty} \|u_{n_j}\|_{B^s_{2,r} } .
$$
Taking the supremum over $t$ we obtain $u \in L^\infty([0,T];B_{2,r}^s)$.

Relabeling $\{u_{n_j}\}$ as $\{u_n\}$ we will refine this sequence to converge to $u \in C( [0,T]; B^{s-1} _{2,r})$.  Also, throughout the rest of the proof for existence, we shall apply a cut-off function $\vp \in \ms(\rr)$ to each of our terms to guarantee convergence in particular topologies along the real line.

\noindent
{\bf Step 2.} 
Next we will employ Ascoli's theorem, to conclude that our element $u$ is in $ C( [0,T]; B^{s-1} _{2,r})$.  First we take note of the following theorem that can be found in Danchin \cite{d3}.  
\begin{theorem}
Let $1 \leq p,r \leq \infty$, $s \in \rr$ and $\ee >0$.  For all $\vp \in C_c^\infty(\rr)$, the map $u \mapsto \vp u$ is compact from $B_{p,r}^{s+\ee}$ to $B_{p,r}^s$.
\end{theorem}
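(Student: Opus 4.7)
The plan is to prove this compact embedding by a Rellich--Kondrachov-type argument combining Littlewood--Paley decomposition with the Fr\'echet--Kolmogorov compactness criterion in $L^p$. Given a bounded sequence $(u_n)$ in $B_{p,r}^{s+\ee}$, multiplication by the Schwartz function $\vp$ maps $B_{p,r}^{s+\ee}$ continuously into itself for every $s \in \rr$ (standard paraproduct argument, using that $\vp$ and all its derivatives lie in $L^\infty$), so $(\vp u_n)$ is uniformly bounded by some $M$ in $B_{p,r}^{s+\ee}$. Moreover each $\D_q(\vp u_n)$ is Schwartz class, being the convolution of a Schwartz kernel with a compactly supported distribution. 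The goal is to extract a subsequence making $(\vp u_n)$ Cauchy in $B_{p,r}^s$.

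The argument splits at a cutoff frequency $N \in \nn$. For the high-frequency tail $q \ge N$, inserting the factor $2^{\ee q}\cdot 2^{-\ee q}$ into the Besov norm gives
\begin{equation*}
\Bigl(\sum_{q \ge N} (2^{sq}\|\D_q(\vp u_n - \vp u_m)\|_{L^p})^r\Bigr)^{1/r} \le 2^{-N\ee}\|\vp u_n - \vp u_m\|_{B_{p,r}^{s+\ee}} \le 2CM\cdot 2^{-N\ee},
\end{equation*}
uniformly in $n,m$, with the $r=\infty$ case identical using $\sup_q$ in place of the sum. So the tail is uniformly small. The main task is then to show that for each fixed $q$, the family $\{\D_q(\vp u_n)\}_{n \in \nn}$ is precompact in $L^p(\rr)$; a diagonal extraction then handles all $q < N$ simultaneously.

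Precompactness at fixed $q$ follows from Fr\'echet--Kolmogorov: (i) $L^p$-boundedness is immediate from $\|\D_q(\vp u_n)\|_{L^p}\le 2^{-sq}\|\vp u_n\|_{B_{p,r}^s}\lesssim M$; (ii) $L^p$-equicontinuity comes from spectral localization, since $\D_q(\vp u_n)$ has spectrum in a fixed annulus of size $2^q$, so Bernstein's inequality yields a uniform bound on $\|\p_x \D_q(\vp u_n)\|_{L^p}$ and hence a translation modulus of continuity $O(|h|)$ uniform in $n$; (iii) tightness follows from the Schwartz-class decay of the convolution kernel $K_q$ of $\D_q$ together with the compact support of $\vp u_n$, which forces $\D_q(\vp u_n)(x)$ to decay rapidly for $|x|$ off a fixed large ball, uniformly in $n$. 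Given $\de > 0$, choose $N$ with $2CM\cdot 2^{-N\ee}<\de/2$, extract by diagonalization a subsequence $(u_{n_k})$ along which every low-frequency block converges in $L^p$, and conclude $\|\vp u_{n_k} - \vp u_{n_\ell}\|_{B_{p,r}^s}<\de$ for $k,\ell$ sufficiently large.

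The principal technical obstacle is verifying tightness (iii): unlike in the classical Rellich--Kondrachov theorem on bounded domains, $\D_q$ is a nonlocal Fourier multiplier, so $\D_q(\vp u_n)$ is not compactly supported and the tightness has to come from the interaction of the Schwartz kernel $K_q$ with the fixed compact support of $\vp$, via a duality pairing since $\vp u_n$ is only a distribution when $s+\ee$ is negative. A secondary subtlety is the preservation of the Besov norm under multiplication by $\vp$ when $s+\ee \le 0$: the Moser-type estimates listed in the Preliminaries apply for positive regularity, so the negative-index case must be handled separately via paraproduct decomposition together with the boundedness of $\vp$ and its derivatives.
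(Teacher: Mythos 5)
The paper does not actually prove this statement: it is quoted verbatim as a known result from Danchin's lecture notes \cite{d3} and used as a black box to get precompactness of $\{\vp u_n(t)\}$ in Step 2 of the real-line existence argument. So there is no in-paper proof to compare against; what you have written is, in essence, the standard proof of the cited result, and its skeleton is sound: the extra $\ee$ of regularity makes the high-frequency tail $q\ge N$ uniformly $O(2^{-N\ee})$, each fixed block $\D_q(\vp u_n)$ is handled by a compactness criterion in $L^p$ (boundedness from the Besov norm, equicontinuity from Bernstein at fixed frequency $2^q$, tightness from the rapid decay of $K_q(x-\cdot)$ paired against the fixed compact support of $\vp u_n$), and a diagonal extraction finishes. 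You also correctly identify the two places where real work is hidden: the boundedness of $u\mapsto\vp u$ on $B^{s+\ee}_{p,r}$ for $s+\ee\le 0$ (which needs the paraproduct/Bony decomposition, not the positive-order Moser estimate from the Preliminaries), and the tightness of the nonlocal blocks, which should be run through a duality pairing such as $|\D_q(\vp u_n)(x)|=|\langle u_n,\vp\,K_q(x-\cdot)\rangle|$ using that bounded sets of $B^{s+\ee}_{p,r}$ embed into some $B^{\s}_{\infty,\infty}$ and that the Schwartz seminorms of $K_q(x-\cdot)$ on $\mathrm{supp}\,\vp$ decay rapidly in $|x|$. Two small points to repair: for $p=\infty$ the Fr\'echet--Kolmogorov criterion is not the right tool and you should instead invoke Arzel\`a--Ascoli in $C_0(\rr)$ (your blocks are continuous and decay at infinity, so this is cosmetic); and in the final assembly you should make explicit that for each $\de$ only the finitely many blocks $-1\le q<N$ need to be made small, each term of the finite sum tending to zero along the diagonal subsequence. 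With those details filled in, your argument is a correct and self-contained proof of a result the paper merely cites.
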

This tells us that the set $U(t)=\{\vp u_n(t)\}$ is precompact in $B^{s-1} _{2,r}$.  Since the uniform limit of continuous functions is continuous, if we show that the sequence $\vp u_n$ is equicontinuous in $ C( [0,T]; B^{s-1} _{2,r})$, we show that there exists a limit point $ \vp u\in C( [0,T]; B^{s-1} _{2,r})$.
Let $t_1 $ and $t_2$ be in $[0,T]$. By the mean value theorem, 
$$\|
\vp u_n(t_1)  - \vp u_n(t_2) \|_{B^{s-1} _{2,r} }\le  |t_1 - t_2 | \sup_{t \in [0, T]} \| \p_t u _n \|_{B^{s-1} _{2,r} } .
$$
We have 
\begin{align}\label{3.17}
 \| \p_t u _n \|_{B^{s-1} _{2,r} }  \le \frac32 \| u_{n-1}   \p_x u_n \|_{B^{s-1} _{2,r} }  + \| \Lambda^{-1} [\p_x u]  \|_{B^{s-1} _{2,r} } 
\end{align}
Using ${B^{s-1} _{2,r} } $ is an algebra, we find
$$
 \| \p_t u _n \|_{B^{s-1} _{2,r} }  \le \frac32 \| u_{n-1}  \|_{B^{s-1} _{2,r} }   \| \p_x u_n \|_{B^{s-1} _{2,r} }  +  \|  u_{n-1} \|_{B^{s-1} _{2,r} }
$$
Hence, 
$$\|
\vp u_n(t_1)  - \vp u_n(t_2) \|_{B^{s-1} _{2,r} }\le  |t_1 - t_2 | 2 \left(\| u_{0}  \|_{B^{s} _{2,r}}  +\| u_{0}  \|_{B^{s} _{2,r}}  ^2 \right).
$$
Thus, $\vp u_n$ is equicontinuous and we conclude that there exists a limit point of $\vp u_{n_j} $, $\vp u$, such that $\vp u \in C([0,T]; B^{s-1} _{2,r})$.

\noindent
{\bf Step 3.} We now show that the solution $ u \in C([0,T] ; B^{s}_{2,r})$. 
Since $u \in L^\infty([0,T] ; B^{s}_{2,r})$, we must show that that for every fixed $ t \in (0, T)$
$$
\lim_{\tau \rightarrow t} \| u(\tau) - u(t) \|_{B^s_{2,r} } = 0. 
$$
We will show this limit by applying the Dominated convergence theorem. 
By definition of the Besov space norm we have
\begin{align}
\| u(\tau) - u(t) \|_{B^s_{2 ,r} } ^ r  
&= \sum_{q \geq -1}2^{sqr}\|\D_q(u(\tau)-u(t))\|_{L^2}^r
\\
& = 
\sum_{q \geq -1} 2^{sqr}[\|\D_qu(\tau)\|_{L^2}^2+\|\D_qu(t)\|_{L^2}^2-2\langle\D_qu(\tau),\D_qu(t)\rangle_{L^2}]^r.
\end{align}
First notice that the function being summed is dominated
$$
2^{sqr}[\|\D_qu(\tau)\|_{L^2}^2+\|\D_qu(t)\|_{L^2}^2-2\langle\D_qu(\tau),\D_qu(t)\rangle_{L^2}]^r
\le 
2^{sqr}  \left( 4 \sup_{t' \in [0, T]} \|  \D_q u(t')\|_{L^2}^2\right)^{r}  ,
$$
which is $q$-summable since $u\in L^\infty([0,T]; B^s_{2,r}) $. 
Thus, if for every $q$, we show the two limits (where we are considering functions of $q$)
\begin{align}
 &\langle\D_qu(\tau),\D_qu(t)\rangle_{L^2}  \xrightarrow[]{\tau\rightarrow t} \|\D_qu(t)\|_{L^2}
\label{limit_2}
\\
&\|\D_qu(\tau)\|_{L^2}  \xrightarrow[]{\tau\rightarrow t} \|\D_qu(t)\|_{L^2}
\label{limit_1} ,
\end{align}
then we may conclude that for each $q$
$$
\lim_{\tau \rightarrow t} 2^{sqr}[\|\D_qu(\tau)\|_{L^2}^2+\|\D_qu(t)\|_{L^2}^2-2\langle\D_qu(\tau),\D_qu(t)\rangle_{L^2}]^r
= 0,
$$
and we may conclude that by the Dominated convergence theorem for series
$$
\lim_{\tau \rightarrow t} \| u(\tau) - u(t) \|_{B^s _{2,r} } = 0. 
$$
{\bf Proof of \eqref{limit_2}.}
We have 
\begin{align}
D_1\dot =&\langle\D_qu(\tau),\D_qu(t)\rangle_{L^2} - \| \D_q u(t)\|_{L^2}^2
= 
\langle\D_q(u(\tau)-u(t)),\D_q u(t)\rangle_{L^2}
,
\end{align}
and therefore, the Cauchy-Schwarz inequality yields 
\begin{align}
D_1 
& \le
\| \Delta_q  (  u(\tau) -  u(t)) \|_{L^2}  \| \Delta_q  u(t)\|_{L^2}.
\end{align}
Summing over $q$, and using the definition of Besov spaces, the above is bounded by 
\begin{align}
D_1 & 
\le 
\sum_{q = 0}^\infty \| \Delta_q  (  u(\tau) -  u(t)) \|_{L^2} \sum_{q = 0}^\infty  \| \Delta_q  u(t)\|_{L^2}
\\
& = 
\| u(\tau) -  u(t) \|_{B^{0}_{2,1} }  \|  u(t)\|_{B^{0}_{2,1} }.
\end{align}
Since $s-1 >0$, we may apply the Besov space embedding theorem, to conclude 
\begin{align}
D_1 & 
\le
\| u(\tau) -  u(t) \|_{B^{s-1}_{2,r} }  \|  u(t)\|_{B^{s}_{2,r} } \le 2  \|  u_0\|_{B^{s}_{2,r} } \| u(\tau) -  u(t) \|_{B^{s-1}_{2,r} } .
\end{align}
From Step 2, we know that $\lim_{\tau \rightarrow t} \| u(\tau) -  u(t) \|_{B^{s-1}_{2,r} }   = 0$, and we complete the proof of \eqref{limit_2}. 

\noindent
{\bf Proof of \eqref{limit_1}.}
We see that 
\begin{align}
D_2 & \dot = 
\|\D_qu(\tau)\|_{L^2}^2-\|\D_qu(t)\|_{L^2}^2
\\
& = 
\int_\rr(\D_qu(\tau))^2-(\D_qu(t))^2)dx
\\
& = 
\int_\rr\D_q(u(\tau)+u(t))\D_q(u(\tau)-u(t))dx.
\end{align}
Applying the Cauchy Schwarz inequality yields 
\begin{align}
D_2 & \le
 \| \Delta_q  (  u(\tau) -  u(t)) \|_{L^2}  \| \Delta_q  (  u(\tau) +  u(t)) \|_{L^2}.
\end{align}
We sum over $q$ to obtain 
\begin{align}
D_2 & \le 
 \|    u(\tau) -  u(t)  \|_{B^{0}_{2,1} }   \|  u(\tau) +  u(t)  \|_{B^{0}_{2,1} } 
.
\end{align}
Using the embedding theorem for Besov spaces, as well as the solution size estimate for the second term yields
\begin{align}
D_2 & \le 
 4 \|u_0\|_{B^s _{2,r}}\|    u(\tau) -  u(t)  \|_{B^{s-1}_{2,r} }   
.
\end{align}
Taking the limit as $\tau \rightarrow t$, of  the above we conclude 
\begin{align}\lim_{\tau \rightarrow t} D_2  = 0 \Longrightarrow 
\|\D_qu(\tau)\|_{L^2}  \xrightarrow[]{\tau\rightarrow t} \|\D_qu(t)\|_{L^2}  ,
\end{align}
which completes the proof of continuity; i.e. $ u \in C([0,T] ; B^{s}_{2,r})$.

\noindent
{\bf Step 4.} We now have enough information about $u$ to show that it is a  classical solution of the FW equation:
$$
\p_t u = - \frac32 u \p_x u + \Lambda^{-1} [ \p_x u ] .
$$ 

First, we see that $\vp u_n \rightarrow \vp u $ in $C([0,T]; C^1)$ since $B^{s}_{2,r}  \hookrightarrow C^1$, therefore the distributional derivatives, $\p_t \vp u_n \rightarrow \p_t \vp u$; i.e.
$$
\int_0^T \vp u_n (t)\p_t \phi(t)dt  \rightarrow \int_0^T \vp u(t ) \p_t \phi(t) dt   
$$ 
for $\phi \in\mathcal{S} ([0, T])$ and 
for $x$ almost everywhere. 
Now we will verify that the limit of $\p_t \vp u_n$ is
 \begin{align}\label{3.37}
\p_t \vp u_n \rightarrow  - \vp\frac32 u \p_x u + \vp\Lambda^{-1} [ \p_x u ] \in C([0, T]; C),
\end{align}
 which shows that $u$ is classically differentiable in $t$ and is a classical solution of FW. Indeed,  letting $\| \vp u_n \| _{ C([0,T]; B^{s}_{2,r} )} <\rho$, we have 
 \begin{align}
\|   \vp\Lambda^{-1} [ \p_x u_n ] -  \vp\Lambda^{-1} [ \p_x u ] \|_{ C([0,T]; C)}  \le C_1(\rho) \| \vp u_n - \vp u\| _{ C([0,T]; B^{s}_{p,r} )}   \rightarrow 0
\\
\| \vp u_n  \p_x u_{n+1} - \vp u   \p_x u \|_{ C([0,T]; C)} \le  C_2(\rho) \| \vp u_n - \vp u\| _{ C([0,T]; B^{s}_{2,r} )}   \rightarrow 0,
\end{align}
  where we used the Besov space embedding theorem $(B^{s-1}_{p,r} \hookrightarrow C)$. Thus, taking the limit as $n \rightarrow \infty$ we obtain \eqref{3.37}. 

Since $u$ is differentiable in $t$, and satisfies the FW equation, we can measure in what space $\p_t u$ exists in.  We observe from inequality  \eqref{3.17}, by replacing $\p_t \vp u_n$ with $\p_t \vp u$, that $u \in C^1 ([0,T]; B^{s-1} _{2,r}$), thus we have shown existence of a solution to the Cauchy problem for the FW equation 
 $$
u \in C ([0,T]; B^{s} _{2,r}) \cap C^1 ([0,T]; B^{s-1} _{2,r}).
$$

\section{Nonuniform dependence of the data-to-solution map on the torus}
In this section, we show that the data-to-solution map is not uniformly continuous. We will use the method of approximate solutions, and set 
$$
u^n (x,t) = \frac23  n^{-1} + n^{-s  } \cos(n x-t)\quad \text{ and } \quad v^n =- \frac23   n^{-1} + n^{-s  } \cos(n x+t).
$$
Now let $u_n$ and $v_n$ dentoe the actual solutions to the FW equation \eqref{fw} with $u^n$ and $v^n$ as initial data respectively.  We shall show the following properties concerning $u^n$, $v^n$ and the corresponding solutions $u_n$ and $v_n$. 
 
\begin{enumerate}
\item$
 \lim_{n\rightarrow \infty} \|u_{0,n}-v_{0,n}\|_{B^s_{2,r} } = 0
$ 
\item$  \|u_{n}(t)\|_{B^s_{2,r} }+ \|v_{n}(t)\|_{B^s_{2,r} } \le C\quad  \forall \quad 0\le t\le T
$ 
\item$  \|u_{n}(t)-u^n(t)\|_{B^s_{2,r} }+ \|v_{n}(t)-v^n(t)\|_{B^s_{2,r} } \le n^{-\alpha} ,\quad \alpha > 0, \quad  \forall \quad 0\le t\le T
$ 
\item$  \|u_{n}(t)-v_n(t)\|_{B^s_{2,r} } > \sin(t), \quad  \forall \quad 0< t\le T.$
\end{enumerate}
 
We will provide the details when $r= \infty$. In the case $r < \infty$, the proof is similar to the result contained in \cite{holmes-tiglay}.

\subsection{Convergence of initial data and common lifespan}
Taking the difference, we see that the first property is trivially satisfied. 
From our well--posedness result, the second property is satisfied if the initial data is bounded independent of $n$. We have
\begin{align}
\|u_{0,n}\|_{B^\gamma_{2,\infty} }   
&\le
 n^{-1}  +\| n^{-s } \cos(n x)\|_{B^\gamma_{2,\infty }} 
\\
&  
=  n^{-1} 
+ \sup_q  n^{-s  } 2^{\gamma q } \| \Delta_q  \cos(n x) \|_{L^2} ,
\end{align} 
Letting $\delta_{n}(m)$ be the indicator function, taking a value of $1 $ if $ m = n$ and $0$ otherwise, we have 
\begin{align}
 \| \Delta_q  \cos(nx) \|_{L^2} =  \frac\pi2 \left( \sum_{m=-\infty}^{\infty}    \varphi_q^2(m) \left| \delta_{n}(m) + \delta_{-n}(m)\right| ^2 \right)^{1/2} 
=  \frac\pi {2} \left(   2  \varphi_q^2(n)   \right)^{1/2} ,
\end{align}
where we used that $\varphi_q$ is symmetric. By the definition of $\varphi_q$, we have 
$$
 \varphi_q(n) = 
\begin{cases}
1 \quad \text{ if } \quad \frac{1}{\ln(2) } \ln(\frac38 n) \le q \le  \frac{1}{\ln(2) } \ln(\frac43 n) 
\\
0 \quad \text{ otherwise} .
\end{cases}  
$$
Hence, we   obtain 
\begin{align}\label{estimate_id} 
\|u_{0,n}\|_{B^\gamma_{2,\infty} }    
&  
\approx  
 n^{-1} 
+   n^{ \gamma-s  }  .
\end{align}

\subsection{Error bounds for $u_n - u^n$}  
We have that the difference between the exact and approximate solutions, $\mathcal{E} = u_n -  u^n $  satisfy
\begin{equation}\label{transport_eq_E} \begin{cases}
\partial_t \mathcal{E}  + \frac32 u^n  \partial_x \mathcal{E}  = -\frac32    \mathcal{E} \p_x u_n + \Lambda^{-1} \left[ \p_x \mathcal{E} \right]- R
\\
\mathcal{E}  (x,0) = 0,
\end{cases} 
\end{equation} 
where the residue $ R =\p_t u^n +    \frac32 u^n \p_x u^n-  \Lambda^{-1} \left[ \p_x  u^n  \right] $.
We shall show that for all $0<t<T$ 
$
\lim_{n\rightarrow \infty} \| \mathcal{E} (t)\| _{B^{s}_{2,r} } = 0,
$
which we shall accomplish   by showing that for  $\frac12 < \gamma < s-1$,
$
\lim_{n\rightarrow \infty} \| \mathcal{E} (t)\| _{B^{\gamma}_{2,r} } = 0.
$
Then we shall  interpolating between a particular  $s_1 < s$ and $s_2 >s$.

Since the error $\mathcal{E}$ satisfies the transport equation \eqref{transport_eq_E}, for $\gamma$ satisfying  $s-1> \gamma >1/2$ we may apply Proposition \ref{lt-est} to obtain 
 \begin{align} \label{5.16}
e^{-CV(t) }\| \mathcal{E} \| _{B^{\gamma}_{2,\infty} } \le   C \int_0^t e^{- CV(\tau) } \| F(\tau) \|_{B^{\gamma} _{2,\infty} } d\tau,
\end{align}
where 
$$
V(t) = \int_0^t \| \p_x v (\tau)\|_{B^{\gamma-1} _{2,\infty} } d\tau =   \int_0^t \| \p_x   u^n  \|_{B^{\gamma-1} _{2,\infty} } d\tau \lesssim n^{ \gamma - s } t
$$  
and 
$$
F(t) =  -\frac32    \mathcal{E} \p_x u_n + \Lambda^{-1} \left[ \p_x \mathcal{E} \right]- R.
$$
We differentiate  \eqref{5.16} with respect to $t$ and use 
\begin{align}\notag
\| F(\tau) \|_{B^{\gamma} _{2,\infty} }&  = \| -\frac32    \mathcal{E} \p_x u_n + \Lambda^{-1} \left[ \p_x \mathcal{E} \right]- R\| _{B^{\gamma} _{2,\infty} }   
\lesssim (1 + n^{ \gamma +1- s }  )
 \| \mathcal{E}  \|  _{B^{\gamma} _{2,\infty} }   + \| R \| _{B^{\gamma} _{2,\infty} } ,
\end{align} 
 to obtain the inequality 
 \begin{align}
\frac{d}{dt} \left[ e^{-CV(t) }  \| \mathcal{E}  \| _{B^{\gamma }_{2,\infty}} \right]\le   C  e^{- CV(t) }  (1 + n^{ \gamma +1- s }  ) \| \mathcal{E}  \|  _{B^{\gamma } _{2,\infty}} +  C  e^{- CV(t) }  \| R \|  _{B^{\gamma } _{2,\infty}} .
\end{align}
Using the bound on $V$ and setting $y = e^{-CV(t) }  \| \mathcal{E}  \| _{B^{\gamma }_{2,\infty}}$ , we obtain the ODE
 \begin{align}
\frac{d y}{dt} \le   C (1 + n^{ \gamma +1- s }  ) y +    C_{s,\gamma}   e^{C n^{ \gamma - s } t  }\| R \|  _{B^{\gamma } _{2,\infty}} .
\end{align}
Since $ \gamma + 1 < s$ we obtain 
 \begin{align} \label{E-est}
\| \mathcal{E}  \| _{B^{\gamma }_{2,\infty}}  \approx y \lesssim\| R \|  _{B^{\gamma } _{2,\infty}}  ,
\end{align}
i.e. the difference between actual and approximate solutions are bounded by the size of the residue, $R$, for the entire lifespan of the solutions. The following lemma yields the size estimate for the residue. 
\begin{lemma}
For all $t\in \rr$ and $0 < \gamma < s$, we have 
$\| R \|  _{B^{\gamma} _{2,\infty}}  <  n^{-\alpha} $ for some $\alpha = \alpha (s,\gamma) > 0$.
\end{lemma}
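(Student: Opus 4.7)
The plan is to write $R$ out explicitly and exploit the fact that, by construction, the leading-order oscillatory terms in $\partial_t u^n$ and the transport term $\tfrac{3}{2} u^n \partial_x u^n$ are designed to cancel. Using
$$
u^n(x,t) = \tfrac{2}{3} n^{-1} + n^{-s}\cos(nx - t),
$$
a direct computation gives $\partial_t u^n = n^{-s}\sin(nx-t)$ and
$$
\tfrac{3}{2} u^n \partial_x u^n = -n^{-s}\sin(nx-t) - \tfrac{3}{4} n^{1-2s}\sin\bigl(2(nx-t)\bigr),
$$
where the linear term was engineered to cancel $\partial_t u^n$ because of the precise choice of the constant $\tfrac{2}{3} n^{-1}$. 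After cancellation the only contribution to the nonlinear part of $R$ is the quadratic interaction at frequency $2n$, with amplitude $\tfrac{3}{4} n^{1-2s}$.

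Next I would compute the nonlocal term using the Fourier symbol $\xi/(1+\xi^2)$ of $\Lambda^{-1}\partial_x$. Since $u^n$ only has frequencies $0$ and $\pm n$, one obtains
$$
\Lambda^{-1}[\partial_x u^n] = -\frac{n^{1-s}}{1+n^2}\sin(nx-t),
$$
which has amplitude $\lesssim n^{-1-s}$ and sits at frequency $n$. Combining, the residue has the form
$$
R(x,t) = -\tfrac{3}{4} n^{1-2s}\sin\bigl(2(nx-t)\bigr) + \frac{n^{1-s}}{1+n^2}\sin(nx-t),
$$
a sum of two pure trigonometric modes at frequencies $2n$ and $n$, with a trivial $t$-dependence amounting only to an $x$-translation (which leaves the Besov norm invariant, so the bound will be uniform in $t$).

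The main step is then to bound each trigonometric mode in $B^\gamma_{2,\infty}$. I would reuse the computation already carried out for $\|u_{0,n}\|_{B^\gamma_{2,\infty}}$ in \eqref{estimate_id}: for any integer $k \ge 1$ the Littlewood--Paley cutoff $\varphi_q(\pm k)$ is nonzero only for $q \approx \log_2 k$, so $\|\sin(k\cdot)\|_{B^\gamma_{2,\infty}} \approx k^\gamma$, uniformly in the translation parameter. Applying this with $k = 2n$ and $k = n$ respectively yields
$$
\|R\|_{B^\gamma_{2,\infty}} \lesssim n^{1-2s}\, n^{\gamma} + \frac{n^{1-s}}{1+n^2}\, n^\gamma \lesssim n^{\gamma + 1 - 2s} + n^{\gamma - s - 1}.
$$

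Finally, I would check the exponents are negative. Since $s > 3/2 > 1$, we have $2s - 1 > s > \gamma$, so $\gamma + 1 - 2s < 0$; and $\gamma - s - 1 < \gamma - s < 0$. Setting
$$
\alpha = \alpha(s,\gamma) := \min\bigl(2s - 1 - \gamma,\; s + 1 - \gamma\bigr) > 0
$$
gives $\|R\|_{B^\gamma_{2,\infty}} \lesssim n^{-\alpha}$, uniformly in $t$, completing the proof. The only potentially delicate point is verifying the Besov-norm formula for a pure cosine/sine is really uniform in the translation parameter $t$; this follows immediately because the Littlewood--Paley blocks commute with translations, so no further work beyond the computation already done in \eqref{estimate_id} is needed.
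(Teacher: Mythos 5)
Your proposal is correct and follows essentially the same route as the paper: the same explicit cancellation of $\partial_t u^n$ against the linear part of $\tfrac32 u^n\partial_x u^n$ (leaving the frequency-$2n$ term of size $n^{1-2s}$), the same gain of $n^{-2}$ from the symbol of $\Lambda^{-1}\partial_x$ on the frequency-$n$ mode, and the same Littlewood--Paley computation $\|\sin(k\cdot)\|_{B^\gamma_{2,\infty}}\approx k^\gamma$. Your exponent $\alpha=\min(2s-1-\gamma,\,s+1-\gamma)$ agrees exactly with the paper's case split at $s=2$.
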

\begin{proof}
Recall, the residue is defined as 
$$
R =\p_t u^n +     \frac32  u^n  \p_x u^n -\Lambda ^{-1} \p_x u ^n .
$$
By the definition of the approximate solution $u^n(x,t)$, we have 
\begin{align*}
& \p_t  u^n (x,t) =n^{-s } \sin(n x-t), \quad \quad 
\p_x u^n (x,t) =   -n^{1 -s  } \sin(n x-t)
\end{align*}
Therefore, 
\begin{align*}
\p_t u^n +    \frac32  u^n  \p_x u^n  &=  n^{-s } \sin(n x-t) + \frac32 \left(  \frac23 n^{-1} + n^{-s  } \cos(n x-t)\right)(- n^{1 -s  } \sin(n x-t))  
\\
& = -
\frac32  n^{1 -2s } \cos(n x-t)\sin(n x-t).
\end{align*}
Using $\cos(x)\sin(x) = \frac12 \sin(2x) $ we obtain
\begin{align}
\| R \|  _{B^{\gamma} _{2,\infty}}  
& \lesssim
 n^{1 -2s  }   n^{\gamma }  +  n^{1 - s }   n^{\gamma -2  }  \lesssim n^{-\alpha},
\end{align}
where
\begin{equation}
\alpha = 
\begin{cases}
s+1-\g \ \ \ \ \ \ \text{if} \ \ \ s \geq 2 \\ 2s-1-\g \ \ \ \ \ \text{if} \ \ \ s<2.
\end{cases}•
\end{equation}•
\end{proof} From the estimate in inequality \eqref{estimate_id},  the solution  size estimate 
$
\| u_n (t)\| _{B^{s_2}_{2,\infty}}  \le 2\| u_n (0 )\| _{B^{s_2}_{2,\infty}} ,
$
 and repeated application of the algebra property, we have for any $s_2 > s$ 
\begin{align}
\| \mathcal{E} (t)\| _{B^{s_2 }_{2,\infty}}  \lesssim n^{ s_2 - s  } .
\end{align}
By applying the interpolation lemma for Besov spaces, we obtain 
\begin{align}\label{error_est_last}
\| \mathcal{E} (t)\| _{B^{s  }_{2,\infty}}  
&\le 
 \| \mathcal{E} (t)\| _{B^{\gamma  }_{2,\infty}} ^{\frac{s_2 - s}{s_2 - \gamma} } \| \mathcal{E} (t)\| _{B^{s_2  }_{2,\infty}}  ^{\frac{s - \gamma}{s_2 - \gamma} }  \lesssim
n^{-\beta},
\end{align} 
where 
\begin{equation}
\beta = 
\begin{cases}
\frac{s_2-s}{s_2-\g} \ \ \ \ \ \ \ \ \   \ \ \ \text{if} \ \ \ s \geq 2 \\ \frac{(s_2-s)(s-1)}{s_2-\g} \ \ \ \ \ \text{if} \ \ \ s<2.
\end{cases}•
\end{equation}•

\subsection{Nonuniform convergence}
We now show that $u_n$ and $v_n$ stay bounded away from each other for any $ t \neq 0$. 
\begin{align}
\| u_n - v_n \|  _{B^{s  }_{2,\infty}}  \ge \| u^n -v^n  \|  _{B^{s  }_{2,\infty}}  - \| u_n -u^n  \|  _{B^{s  }_{2,\infty}}   -\| v^n - v_n \|  _{B^{s  }_{2,\infty}} .
\end{align}
From \eqref{error_est_last} the second two terms tend towards zero as $n \rightarrow \infty$, therefore, 
\begin{align*}
\liminf_{n\rightarrow \infty} \| u_n - v_n \|  _{B^{s  }_{2,\infty}}  &\ge\liminf_{n\rightarrow \infty}  \| u^n -v^n  \|  _{B^{s  }_{2,\infty}}  
\\ & \ge
\liminf_{n\rightarrow \infty}  n^{-s  } \|   \cos(n x-t)- \cos(n x+t )  \|  _{B^{s  }_{2,\infty}}    
\\ & \approx 
\liminf_{n\rightarrow \infty}  n^{-s  }\|\sin(n  x)   \|  _{B^{s  }_{2,\infty}}    |\sin(t )|
 \approx 
 |\sin(t)  |  >0.
\end{align*}

%
%
\section{Analyticity of Solutions}

In this section, we search for real analytic solutions of the periodic Cauchy problem for the FW equation \eqref{fw}.  The classical Cauchy-Kowalevski theorem does not apply to equation \eqref{fw}.  However, a contraction argument on a scale of Banach spaces can be used for the nonlocal form of this equation \eqref{fwnl} to prove Theorem \ref{anal}.

\textbf{Remark 6.1:}  {\it The contraction argument used in the proof of Theorem \ref{anal} is on a decreasing scale of Banach spaces.  We give the statement of this abstract theorem here as stated in \cite{bao} and \cite{nish} for the convenience of the reader:

Given a Cauchy problem 
\begin{equation}
\label{ivp1}
\p_tu=F(t,u(t)), \ \ \ \ u(0)=0,
\end{equation}
and a decreasing scale of Banach spaces $\{X_s\}_{0<s<1}$ so that for any $s'<s$ we have $X_s \subset X_{s'}$ and $|||\cdot|||_{s'} \leq |||\cdot|||_{s}$, let $T$, $R$, and $C$ be positive numbers, suppose that $F$ satisfies the following conditions:
\begin{enumerate}
\item  If for $0<s'<s<1$ the function $t \mapsto u(t)$ is holomorphic in $|t|<T$ and continuous on $|t| \leq T$ with values in $X_s$ and $\sup_{|t| \leq T}|||u(t)|||_s<R$, then $t \mapsto F(t,u(t))$ is a holomorphic function on $|t|<T$ with values in $X_{s'}$.

\item  For any $0<s'<s \leq 0$ and any $u,v \in X_s$ with $|||u|||_s<R$, $|||v|||_s<R$,
\[
\sup_{|t| \leq T}|||F(t,u)-F(t,v)|||_{s'} \leq \frac{C}{s-s'}|||u-v|||_s.
\]

\item  There exists $M>0$ such that for any $0<s<1$,
\[
\sup_{|t| \leq T}|||F(t,0)|||_s \leq \frac{M}{1-s}.
\]
\end{enumerate}•
Then there exists a $T_0 \in (0,T)$ and a unique function $u(t)$, which for every $s \in (0,1)$ is holomorphic in $|t| < (1-s)T_0$ with values in $X_s$, and is a solution to the initial value problem \eqref{ivp1}.}

For $s>0$ let $E_s$ be defined as 
\[
E_s=\left\{u \in C^\infty(\ci) \ : \ |||u|||_s=\sup_{k>0}\frac{\|\p_x^ku\|_{B_{2,1}^{3/2}}s^k}{k!/(k+1)^2}<\infty \right\}.
\]
Let $X_s$ be the space $E_s$.  We use the decreasing scale of Banach spaces $X_s$ with $s>0$ for the contraction argument.  We take note that the above space is similar to the one introduced in \cite{hm4} where $B_{2,1}^{3/2}$ is replaced by $H^2$.  It is not difficult to check that $E_s$ equipped with the norm $|||\cdot|||_s$ is a Banach space and that for any $0<s'<s$, $E_s$ is continuously included in $E_{s'}$ with
\[
|||u|||_{s'} \leq |||u|||_s.
\]
We now include three properties of the spaces $E_s$ that we shall use in the proof of Theorem \ref{anal}.  Since we have the following from Besov spaces
\[
\|fg\|_{B_{2,1}^{3/2}} \leq C(\|f\|_{B_{2,1}^{3/2}}\|g\|_{B_{2,1}^{1/2}}+\|g\|_{B_{2,1}^{3/2}}\|f\|_{B_{2,1}^{1/2}})
\]
and
\[
\|\Lambda^\s\p_x^kf\|_{B_{2,1}^{3/2}} \leq C \|f\|_{B_{2,1}^{3/2+k+\s}}
\]
for $\s \in \rr$ and $k$ a non-negative integer, these three properties follow directly from analagous ones found in \cite{hm4} just by replacing $H^2$ with $B_{2,1}^{3/2}$.  We therefore omit the details for the sake of brevity and refer the reader to \cite{hm4}.

The first may be seen as a ring property for $E_s$ when $0<s<1$:  For any $u,v \in E_s$ there is a constant $c>0$ that is independent of $s$ such that
\[
|||uv|||_s \leq c|||u|||_s|||v|||_s.
\]
The second property is an estimate for the differential operator $\p_x$:  For $0<s'<s<1$ we have 
\[
|||u_x|||_{s'} \leq \frac{C}{s-s'}|||u|||_s
\]
for all $u \in E_s$.  The third property is a pair of estimates for the operator $\Lambda^{-2}=(1-\p_x^2)^{-1}$:  For $0<s<1$ we have
\[
|||\Lambda^{-2}u|||_{s'} \leq |||u|||_s \ \ \ \ \text{and} \ \ \ \ |||\Lambda^{-2}\p_xu|||_{s'} \leq |||u|||_s
\]
for all $u \in E_s$.  Now we proceed to the proof of Theorem \ref{anal}.

\begin{proof}
Note that it is sufficient to verify conditions (1) and (2) in the abstract Cauchy-Kowalevski theorem for
\[
F(u)=-\frac32uu_x+\Lambda^{-2}\p_xu
\]
since $F(u)$ does not depend on $t$ explicitly.  We find that for $0<s'<s<1$ we have
\[
|||F(u)|||_{s'} \leq \frac{C}{s-s'}|||u|||_s^2+|||u|||_s,
\]
hence condition (1) holds.  To verify the second condition we estimate that
\[
|||F(u)-F(v)|||_{s'} \leq C_R\left(|||u-v|||_s+\frac{1}{s-s'}|||u-v|||_s\right)
\]
for any $u,v \in E_s$.  Therefore, condition (2) is satisfied.
\end{proof}

%
%
%
%
\section{Blow-up Criterion}
Here we prove Theorem \ref{blowup}.
\begin{proof}
Applying $\D_q$ to \eqref{fwnl} yields
\begin{equation}
\left(\p_t+\frac32u\p_x\right)\D_qu=\left[\frac32u,\D_q\right]\p_xu+\D_q\Lambda^{-1}\p_xu.
\end{equation}•
From (2.54) on p. 112 in \cite{bcd}, we have that
\begin{equation}
\label{est1}
\left\|2^{sq}\left\|\left[\frac32u,\D_q\right]\p_xu\right\|_{L^2}\right\|_{l^r} \leq C\|u_x\|_{L^\infty}\|u\|_{B_{2,r}^s}.
\end{equation}•
We also have that
\begin{equation}
\label{est2}
\|\Lambda^{-1}\p_xu\|_{B_{2,r}^s} \leq \|u\|_{B_{2,r}^s}.
\end{equation}•
Going along the lines of the proof of Proposition A.1 in \cite{danchin} and by using \eqref{est1} and \eqref{est2} we obtain
\begin{equation}
\label{est3}
\|u\|_{B_{2,r}^s} \leq \|u_0\|_{B_{2,r}^s}+C\int_0^t[1+\|u_x\|_{L^\infty}]\|u\|_{B_{2,r}^s}d\tau
\end{equation}•
Applying Gronwall's inequality to \eqref{est3} yields
\begin{equation}
\label{est4}
\|u(t)\|_{B_{2,r}^s} \leq \|u_0\|_{B_{2,r}^s}\text{exp}\left[C\int_0^t[1+\|u_x\|_{L^\infty}]d\tau\right].
\end{equation}•
Assume that $T^*$ is the maximal time of existence of the solution to the Cauchy problem to \eqref{fwnl}.  If $T^*< \infty$ we claim that
\begin{equation}
\label{est5}
\int_0^{T^*}\|u_x\|_{L^\infty}d\tau = +\infty.
\end{equation}•
We prove the claim \eqref{est5} by contradiction.  If \eqref{est5} is invalid, then from \eqref{est4} we know that $\|u(T^*)\|_{B_{2,r}^s}<\infty$ which contradicts with the fact that $T^*$ is the maximal time of existence to the Cauchy problem to \eqref{fwnl}.  We have concluded the proof of Theorem \ref{blowup}.
\end{proof}

\end{document}